\numberwithin{equation}{section}
\newtheorem{theorem}{Theorem}[section]
\newtheorem{lemma}[theorem]{Lemma}
\newtheorem{proposition}[theorem]{Proposition}
\newtheorem{corollary}[theorem]{Corollary}
\newtheorem{remark}[theorem]{Remark}
\def\cQ {\mathcal{Q}}
\def\bR  {\mathbb{R}}
\def\del{\partial }
\def \la {\langle}
\def \ra {\rangle}
\def \cE {\mathcal{E}}
\def\RR{\mathbb{R}}
\begin{document}

\title{The Maxwell-Boltzmann approximation for ions kinetic modeling}
\author{Claude Bardos\footnotemark[1]  \and Fran\c{c}ois Golse\footnotemark[2] \and Toan T. Nguyen\footnotemark[3] \and R\'emi Sentis \footnotemark[1] 
}

\maketitle

\renewcommand{\thefootnote}{\fnsymbol{footnote}}

\footnotetext[1]{Laboratoire J.-L. Lions, BP187, 75252 Paris Cedex 05, France. Emails: claude.bardos@gmail.com; sentis.remi@gmail.com}

\footnotetext[3]{Department of Mathematics, Pennsylvania State University, State College, PA 16802, USA. Email: nguyen@math.psu.edu. TN's research was supported in part by the NSF under grant DMS-1405728.
}

\footnotetext[2]{Ecole Polytechnique, Centre de Math\'ematiques Laurent Schwartz, 91128 Palaiseau Cedex,
France, Email: francois.golse@polytechnique.edu)
}


\begin{abstract}
This paper aims to justify the Maxwell-Boltzmann approximation for electrons, preserving the dynamics of ions at the kinetic level. Under sufficient regularity assumption, we provide a precise scaling where the Maxwell-Boltzmann approximation is obtained. In addition, we prove that the reduced ions problem is well-posed globally in time.  
\end{abstract}

\tableofcontents

\section{Introduction}


\subsection{Physical framework for the modeling}

Consider a plasma consisting of electrons and one kind of ions, which are
charged particles moving in an electromagnetic field. Let $\widetilde{f}%
_{+}(x,v,t)$ and $\widetilde{f}_{-}(x,w,t)$ be the corresponding density
distribution functions for ions and electrons, respectively; here, $(v,w)$
represent particle velocity variables for ions and electrons belonging to $%
\mathbb{R}^{d}$ (here $d=2$ or $3$),  and $x$ denotes the space variable
belonging to a periodic torus or an open set of $\mathbb{R}^{d}$ with a boundary, and $t$ is the time. In absence of magnetic fields, the
dynamics of the plasma is
modeled by the following well-known system 
\begin{eqnarray}
\partial _{t}\widetilde{f}_{-}+w\cdot \nabla _{x}\widetilde{f}_{-}-\frac{q_{e}%
}{m_{e}}\widetilde{E}\cdot \nabla _{w}\widetilde{f}_{-} &=&\widetilde{Q}_{-}(%
\widetilde{f}_{-})  \label{VB-m} \\
\partial _{t}\widetilde{f}_{+}+v\cdot \nabla _{x}\widetilde{f}_{+}+\frac{q_{e}%
}{m_{i}}\widetilde{E}\cdot \nabla _{v}\widetilde{f}_{+} &=&0
\end{eqnarray}%
where $m_{e},m_{i}$ denote the electrons and ions mass, $q_{e}$ the
elementary charge (for the sake of simplicity we assume that the ions charge
is equal to $1$). The electrostatic field is given by $\widetilde{E}=-\nabla
_{x}\widetilde{\phi }$ and solves the Poisson equation:
\begin{equation*}
- \varepsilon^0\Delta _{x}\widetilde{\phi }=\langle 
\widetilde{f}_{+}\rangle -\langle \widetilde{f}_{-}\rangle 
\end{equation*}
with $\varepsilon ^{0}$ being the vacuum permittivity. Here and in the sequel, $\left\langle \cdot\right\rangle $ denotes the
integral on the velocity space, that is $\la F \ra: = \int_{\mathbb{R}%
^d} F(v)dv. $ In equation \eqref{VB-m}, $\widetilde{Q}_{-}(%
\widetilde{f}_{-})$ accounts for the 
collisional operator of
electrons with themselves (for example, a binary Boltzmann or Fokker-Planck operator). We have assumed that there is no
collision between electrons and ions and of course no binary collision of
ions with themselves. For interaction between disparate masses between particles, see, for instance, \cite{DL96a,DL96b}.

\bigskip

Such a model has been widely used in plasma physics from a theoretical point
of view; see, for instance, \cite{GPbook, Nbook,SePlasma,TKbook}. But, since the
electron/ion mass ratio is small, the characteristic time scale of the
dynamics of ions is significantly larger than that of electrons. As a
consequence, if one addresses a model for the ions dynamics, it is very classical
to use a fluid modeling for the electrons, assuming they have reached the thermal equilibrium; that is to say, the distribution function is a
Maxwellian function with an electrons temperature $\widetilde{\theta }$ and a
density given by the well-known \emph{Maxwell-Boltzmann relation} 
\begin{equation}
\langle \widetilde{f}_{-} \rangle =e^{q_{e}\widetilde{\phi }/%
\widetilde{\theta }}  \label{MBe}
\end{equation}%
(the temperature $\widetilde{\theta }$ can be expressed in energy units).

\bigskip

\emph{In this paper, we aim to justify the Maxwell-Boltzmann approximation for electrons} \eqref{MBe} from the kinetic model \eqref{VB-m}. This approximation has been used in a number of works; for instance, see
\cite{Bouchut, HK1,HK2}, among many others. Other important scalings involving the massless electrons limit (\cite{CDL, Degond,Grenier,Herda}), quasi-neutral approximations (\cite{HK1,HK3}), or large magnetic fields (\cite{Claudia}) may be compared with the present paper. We note in particular the work \cite{Guo} where the local Maxwellian for electrons is recovered, and instead of the Maxwell-Boltzmann relation, the isentropic relation $\langle \widetilde{f}_{-} \rangle \sim \tilde \theta^{3/2}$ is used.



\subsection{The non-dimensional form}

We denote by $\theta _{ref}$ and $N_{ref}$ the characteristic values of the
electrons temperature and of the electrons density, and introduce the
non-dimensional parameter 
\begin{equation*}
\varepsilon =\sqrt{\frac{m_{e}}{m_{i}}}
\end{equation*}%
assumed to be sufficiently small. 
To derive non-dimensional equations, let us rescale the velocity of electrons and their distribution
function as follows:  
\begin{equation*}
w=v/\varepsilon ,\qquad f_{-}(v)=\frac{1}{\epsilon^3 N_{ref}}\widetilde{f}_{-}(v/\varepsilon ),\qquad f_{+}(v)=\frac{1}{N_{ref}}\widetilde{f}_{+}(v).
\end{equation*}
Observe that the scaling preserves the local density $\int \widetilde{f}_{-}(w)dw=\int f_{-}(v)dv.$ We also introduce $\lambda _{D}$, the Debye length (e.g., see \cite{TKbook}), 
$$\lambda _{D}=\sqrt{\epsilon ^{0}\theta _{ref}/(q_{e}^{2}N_{ref})} $$
and set $\phi =q_{e}\widetilde{\phi }/\theta _{ref}$ and $\theta =\widetilde{%
\theta }/\theta _{ref}$. The scaled collisional operator, instead of $\widetilde{Q}_{-}(\widetilde{f}_{-})$, now reads \begin{equation*}
\eta _{\varepsilon }Q(f_{-})
\end{equation*}
for $\eta_\epsilon$ being a scaling parameter; the higher $\eta _{\epsilon }$, the more
collisional is the electrons population. 

\bigskip

In the sequel, we assume that the
plasma is collisional enough; precisely, we assume 
\begin{equation}
\qquad \lim_{\varepsilon \rightarrow 0}\eta _{\varepsilon }\varepsilon
^{-1}=\infty ,\qquad \lim_{\varepsilon \rightarrow 0}\eta _{\varepsilon
} < +\infty.  \label{scaling}
\end{equation}%
Using the above notations, the dynamics of $f_{-}$ and of $f_{+}$ then reads as
follows: 
\begin{eqnarray}
\varepsilon \partial _{t}f_{-}+v\cdot \nabla _{x}f_{-}+\nabla _{x}\phi \cdot
\nabla _{v}f_{-} &=&\eta _{\epsilon }Q(f_{-})  \label{Vlasov} \\
\partial _{t}f_{+}+v\cdot \nabla _{x}f_{+}-\nabla _{x}\phi \cdot \nabla
_{v}f_{+} &=&0  \label{Vlasoi}
\end{eqnarray}
and the Poisson equation for the electric potential $\phi $ reads as 
\begin{equation}
-\lambda _{D}^{2}\Delta _{x}\phi =\left\langle f_{+}\right\rangle
-\left\langle f_{-}\right\rangle .  \label{Poisson}
\end{equation}%

\bigskip

The spatial domain $\Omega $ will be a periodic torus or a bounded open subset of $\mathbb{R}%
^{d} $ with a boundary $\partial \Omega $. In the latter case, we
assume that both ions and electrons reflect specularly:
\begin{equation}
f_{\pm }(x,v,t)=f_{\pm }(x,v-2(v\cdot n(x))n(x),t),\qquad n(x)\cdot v<0\,
\label{bdry-specular}
\end{equation}%
at each point $x\in \partial \Omega $, in which $n(x)$ denotes the outward
normal vector of $\partial \Omega $. We also assume the Neumann boundary
condition for \eqref{Poisson} 
\begin{equation*}
\frac{\partial \phi }{\partial n}_{|_{\partial \Omega }}=0.
\end{equation*}
As for the initial conditions $f_{-}(0)$ and $f_{+}(0),$ in accordance with the
Neumann boundary condition of equation \eqref{Poisson}, we assume
\begin{equation*}
\int \la f_{+}(0)\ra dx=\int \la f_{-}(0)\ra dx=m_{0}
\end{equation*}
Finally, we assume that for each continuous and rapidly decaying function $f (v)$, the collisional operator $Q (\cdot)$ satisfies the following classical properties:

\begin{equation}
\la Q(f)\ra=0,\qquad \la vQ(f)\ra=0,\qquad \la|v|^{2}Q(f)\ra=0,
\label{Q-property}
\end{equation}
and the H-theorem
\begin{equation}
\la Q(f)\log f \ra \le 0,  \label{htheorem}
\end{equation}
with equality implying that such functions are local Maxwellians.

%

\subsection{Conservation properties}

We assume that $f_{-}$ and $f_{+}$ have sufficient regularity and rapidly
decay to zero as $v\rightarrow \infty $. The first property of $Q$ in %
\eqref{Q-property} immediately yields the conservation of mass: 
\begin{equation}
\partial _{t}\la f_{+}\ra+\nabla _{x}\cdot \la vf_{+}\ra=0,\qquad \partial
_{t}\la f_{-}\ra+\varepsilon ^{-1}\nabla _{x}\cdot \la vf_{-}\ra=0.
\label{massev}
\end{equation}%
Together with the specular reflection boundary condition for $f_{\pm }$,
this yields the global conservation of mass: 
\begin{equation}
\int \la f_{+}(t)\ra dx=\int \la f_{-}(t)\ra dx=m_{0},\qquad \forall t\geq 0.
\label{def-m0}
\end{equation}
For the momentum conservation, we get
\begin{equation}
\begin{aligned}
\partial _{t}\la vf_{+}\ra+\nabla _{x}\cdot \la v\otimes vf_{+}\ra&=-\nabla
_{x}\phi \cdot\la f_{+}\ra,  \label{uion}
\\
\partial _{t}\la vf_{-}\ra+\frac{1}{\varepsilon }\nabla _{x}\cdot \la %
v\otimes vf_{-}\ra&=\frac{1}{\varepsilon }\nabla _{x}\phi \cdot\la f_{-}\ra.
\end{aligned}
\end{equation}
Moreover, for the ions and electrons energy conservation, we get 
\begin{equation}
\begin{aligned}
\partial _{t}\la\frac{|v|^{2}}{2}f_{+}\ra+\nabla _{x}\cdot \la v\frac{|v|^{2}%
}{2}f_{+}\ra &=-\nabla _{x}\phi \cdot\la vf_{+}\ra  \label{enion}
\\
\partial _{t}\la\frac{|v|^{2}}{2}f_{-}\ra+\frac{1}{\varepsilon }\nabla
_{x}\cdot \la v\frac{|v|^{2}}{2}f_{-}\ra &=\frac{1}{\varepsilon }\nabla
_{x}\phi \cdot\la vf_{-}\ra . 
\end{aligned}\end{equation}
Hence, a direct computation yields 
\begin{eqnarray*}
\frac{d}{dt}\int \la\tfrac{1}{2}|v|^{2}f_{+}\ra+\la\tfrac{1}{2}|v|^{2}f_{-}%
\ra dx &=&\int_{\Omega }\nabla _{x}\phi \cdot \Big (-\la vf_{+}\ra+\frac{1}{%
\epsilon }\la vf_{-}\ra\Big )dx \\
&=&\int_{\Omega }\phi \nabla _{x}\cdot \Big(\la vf_{+}\ra-\frac{1}{\epsilon }%
\la vf_{-}\ra\Big)dx \\
&=&\int_{\Omega }\phi \partial _{t}\Big(\la f_{-}\ra-\la f_{+}\ra\Big)dx
\end{eqnarray*}%
in which the conservation \eqref{massev} of mass was used. Using the Poisson
equation \eqref{Poisson} and the integration by parts $\int \phi \Delta
(\partial _{t}\phi )dx=-\int \nabla \phi \cdot (\partial _{t}\nabla \phi )dx$ into
the above computation, we obtain the conservation of energy
\begin{equation}
\int \la\frac{|v|^{2}}{2}f_{-}\ra+\la\frac{|v|^{2}}{2}f_{+}\ra dx+\frac{%
\lambda _{D}^{2}}{2}\int_{\Omega }|\nabla _{x}\phi |^{2}\;dx=\cE_{0},\qquad
\forall t\geq 0  \label{def-E0}
\end{equation}
with $\cE_{0}$ being a
constant. Finally, multiplying equation \eqref{Vlasov} by $\log f_{-}$, we obtain 
\begin{equation}
\frac{d}{dt}\int \la f_{-}\log f_{-}\ra dx+\frac{1}{\varepsilon }\eta
_{\varepsilon }\int \la Q(f_{-})\log f_{-}\ra dx  \label{id-entropy} =0.
\end{equation}%
In particular, by \eqref{htheorem}, the entropy of $f_{-}$ is decreasing in time: 
\begin{equation*}
\frac{d}{dt}\int \la f_{-}\log f_{-}\ra dx\leq 0.
\end{equation*}


\subsection{Formal Maxwell-Boltzmann approximation}

{In this section the word {\it formal} refers to the fact that the propositions below are proven under some extra regularity assumption which is reasonable but may not be easy to establish under the present knowledge of the subject.}

 Let $m_0, \cE_0$ be the constants defined as in \eqref{def-m0} and \eqref{def-E0}.
Again, we assume that $f_-$ and $f_+$ have sufficient regularity and rapidly decay to zero as $v\to \infty$. Assume that $\Omega$ is non-axisymmetric. We have the following formal result.

 \begin{proposition}\label{theo-formal} 
 	Assume \eqref{scaling}. Let $(f^\epsilon_-, f^\epsilon_+, \phi^\epsilon)$ be a smooth solution to system \eqref{Vlasov}- \eqref{Poisson} so that 
	$$ |f_-^\epsilon (x,v,t)| \le C e^{-|v|^\gamma} $$
	for some positive constants $C, \gamma$, uniformly in $x,v,t$ and in $\epsilon$.  	
	Then, on any finite time interval $[0,T]$, $\la f^\epsilon_- \log f^\epsilon_- \ra $ is uniformly bounded in $L^1_x$. Assume further that as $\epsilon \to 0$, the functions  
   $ (f^\epsilon_+,f^\epsilon_-, \phi^\epsilon)$ converge in a weak sense.
   Then, the limit  $ (\overline {f_+}, \overline {f_-},  \phi)$ must satisfy 
\begin{equation}\label{MBBB}
 \overline{f_-}(x,v,t)=n_e (x,t)\Big (\frac{\beta(t) }{2\pi}\Big)^{\frac{d}2} e^{-\beta(t) \frac{ |v|^2}2}, \qquad n_e(x,t) = e^{\beta(t) \phi(x,t)}
 \end{equation}
where $f_+ (x,v,t) ,\phi (x,t) , \beta (t)$ solve the following system  
 \begin{equation}
 \begin{aligned}
 & 
 \partial_t\overline{f_+ }+ v \cdot \nabla_x \overline{f_+ } - \nabla_x \phi \cdot \nabla_v \overline{f_+ }= 0,
\\ &
-\lambda _{D}^{2}\Delta \phi+ e^{\beta(t) \phi}= \la \overline{f_+}\ra, \\
 &   \frac{ m_0 d}{2 \beta(t)} +\int_{\Omega} \la \frac{|v|^2}2  \overline{f_+}\ra  dx + \frac 1 2\int_\Omega  |\nabla_x \phi(x,t)|^2 dx = \cE_0 . \end{aligned}
 \end{equation}
 \end{proposition}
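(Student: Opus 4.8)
The plan is to combine the conservation laws from the previous subsection with the entropy dissipation identity, using the collisional regime assumed in \eqref{scaling} to force the electron distribution to a Maxwellian in the limit. The starting point is the entropy identity \eqref{id-entropy}, which I would rewrite as
\begin{equation*}
\frac{d}{dt}\int \la f^\epsilon_-\log f^\epsilon_-\ra\,dx = -\frac{\eta_\epsilon}{\varepsilon}\int \la Q(f^\epsilon_-)\log f^\epsilon_-\ra\,dx \ge 0 \quad\text{(sign from \eqref{htheorem})},
\end{equation*}
so that the negative entropy is nondecreasing. To get the uniform $L^1_x$ bound on $\la f^\epsilon_-\log f^\epsilon_-\ra$, I would first use the Gaussian-type pointwise bound $|f^\epsilon_-|\le Ce^{-|v|^\gamma}$ to control the positive part of $f^\epsilon_-\log f^\epsilon_-$ uniformly (the region $f^\epsilon_-\le 1$ contributes a bounded negative part since $v\mapsto |v|^\gamma e^{-|v|^\gamma}$ is integrable, and on $f^\epsilon_-\ge 1$ one has $\log f^\epsilon_-\le \log C$). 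The total mass $m_0$ and energy $\cE_0$ are conserved by \eqref{def-m0} and \eqref{def-E0}, which bounds the relative entropy against the global Maxwellian from below; together these give the claimed uniform-in-time, uniform-in-$\epsilon$ bound on $[0,T]$.

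Next I would extract the limiting Maxwellian form. Integrating the entropy identity in time and using the uniform entropy bound shows $\tfrac{\eta_\epsilon}{\varepsilon}\int_0^T\!\!\int \la -Q(f^\epsilon_-)\log f^\epsilon_-\ra\,dx\,dt$ is bounded; since $\eta_\epsilon/\varepsilon\to\infty$ by \eqref{scaling}, the dissipation $\int\la -Q(f^\epsilon_-)\log f^\epsilon_-\ra\,dx$ must tend to zero. By the equality case of the H-theorem \eqref{htheorem}, any weak limit $\overline{f_-}$ is a local Maxwellian, i.e.\ $\overline{f_-}=n_e(x,t)(\beta/2\pi)^{d/2}\exp(-\beta|v-u|^2/2)$ with local density, mean velocity $u$, and inverse temperature $\beta$. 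To pin down the parameters I would pass to the limit in the moment equations \eqref{massev}--\eqref{uion}. Multiplying the electron kinetic equation \eqref{Vlasov} by $\varepsilon$ and letting $\varepsilon\to0$ (with $\eta_\epsilon$ bounded, so $\eta_\epsilon Q(f^\epsilon_-)$ stays controlled after division) forces the flux terms to balance the field: in the limit $v\cdot\nabla_x\overline{f_-}+\nabla_x\phi\cdot\nabla_v\overline{f_-}=0$, and inserting the Maxwellian ansatz into this transport constraint yields $u=0$ (the mean velocity must vanish), $\beta=\beta(t)$ spatially constant, and the Maxwell-Boltzmann relation $n_e=e^{\beta\phi}$ together with $\nabla_x(\log n_e-\beta\phi)=0$. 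The hypothesis that $\Omega$ is non-axisymmetric is what rules out a nonzero rigid rotation in $u$, forcing $u\equiv 0$.

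Having identified $\overline{f_-}$, the reduced system follows by closing the ion equation and the constraints. The ion transport equation is just the $\varepsilon$-independent \eqref{Vlasoi} passed to the weak limit, giving the first line of the claimed system with $\overline{f_+}$. The Poisson equation \eqref{Poisson} becomes $-\lambda_D^2\Delta\phi+e^{\beta\phi}=\la\overline{f_+}\ra$ once $\la\overline{f_-}\ra=n_e=e^{\beta\phi}$ is substituted. Finally, the energy identity \eqref{def-E0} in the limit splits the electron kinetic energy as $\int\la\tfrac12|v|^2\overline{f_-}\ra\,dx=\tfrac{m_0 d}{2\beta}$ (the second moment of a centered Maxwellian of total mass $m_0$), and combining with the conserved $\cE_0$ gives the third equation relating $\beta(t)$, the ion energy, and the field energy.

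I expect the \textbf{main obstacle} to be the passage to the limit in the nonlinear terms rather than the algebra of the Maxwellian moments. Specifically, justifying that $\la\overline{f_-}\ra$ equals $e^{\beta\phi}$ rather than merely $n_e$ proportional to some function of $\phi$ requires controlling the product $\nabla_x\phi^\epsilon\cdot\nabla_v f^\epsilon_-$ in the weak limit, which demands strong enough convergence of $\nabla_x\phi^\epsilon$ (obtainable from elliptic regularity for \eqref{Poisson} given the uniform charge-density bounds, but delicate near the boundary under specular reflection). The other subtle point is rigorously extracting $u\equiv0$ from the non-axisymmetry hypothesis: one must show the limiting transport constraint on a Maxwellian admits only the trivial mean velocity, which is where the geometry of $\Omega$ enters and where a careful argument—rather than a formal moment computation—is needed.
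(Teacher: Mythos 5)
Your proposal follows essentially the same route as the paper: the entropy identity plus the scaling $\eta_\epsilon\varepsilon^{-1}\to\infty$ forces the dissipation to vanish in the limit so that $\overline{f_-}$ is a local Maxwellian, the limiting transport constraint $v\cdot\nabla_x\overline{f_-}+\nabla_x\phi\cdot\nabla_v\overline{f_-}=0$ then pins down $\beta=\beta(t)$, $u=0$ and $n_e=e^{\beta\phi}$, and the reduced system follows from the ion moments together with the conserved mass and energy. The one step you leave implicit --- ruling out a nonzero rigid rotation in $u$ from the non-axisymmetry of $\Omega$ --- is carried out in the paper by expanding the transport constraint as a polynomial in $v-u$, reading off $\nabla u+\nabla u^{t}=0$ from the quadratic term and $u\cdot n=0$ on $\partial\Omega$ from specular reflection, and then invoking Korn's inequality (Lemma \ref{lem-Korn}).
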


\begin{remark}{\em The relaxation to the equilibrium of the form of a Maxwellian as in \eqref{MBBB} is precisely due to the presence of the collision operators, without which the equilibrium is of the form 
 $$ \overline{f_-}(x,v,t) = \mu( \frac{|v|^2}{2} - \phi)$$
for any function $\mu(\cdot)$,  with $\phi$ solving the Poisson equation 
$$-\lambda _{D}^{2}\Delta \phi+ \int_{\RR^d} \mu( \frac{|v|^2}{2} - \phi) \; dv = \la \overline{f_+}\ra.$$
}\end{remark}

\begin{remark}{\em We note that there is no time-dynamics for the electrons in the limit of $\epsilon \to 0$. The time-dependence is precisely through the dynamics of ions. 
If we denote $$ n_I (x,t) = \la \overline{f_+} (x,\cdot,t) \ra, $$
the Poisson equation now reads
\begin{equation}\label{MB}-\lambda _{D}^{2}\Delta \phi + e^{\beta \phi}= n_I\end{equation}
and is often referred to as the Poisson-Poincare equation. 
}\end{remark}

 We now consider the following system with a collisional operator for ions
  \begin{eqnarray}
 \varepsilon \partial _{t}f_{-}+v\cdot \nabla _{x}f_{-}+\nabla _{x}\phi \cdot
 \nabla _{v}f_{-} &=&\eta _{\epsilon }Q(f_{-}) 
  \label{Vlaso2} \\
 \partial _{t}f_{+}+v\cdot \nabla _{x}f_{+}-\nabla _{x}\phi \cdot \nabla
 _{v}f_{+} &=& \sigma_\epsilon Q_+(f_{+})  
  \label{Vlaso22}
 \end{eqnarray}
 coupled with \eqref{Poisson}. Our second formal result is as follows. 
 
 \begin{proposition}\label{the2}
 Assume \eqref{scaling} and that $ \lim_{\epsilon \to 0} \sigma_\epsilon= \infty$. Let $(f^\epsilon_-, f^\epsilon_+, \phi^\epsilon)$ be a smooth solution to system \eqref{Vlaso2}, \eqref{Vlaso22}, and \eqref{Poisson},
so that 
	$$ |f_\pm^\epsilon (x,v,t)| \le C e^{-|v|^\gamma} $$
	for some positive constants $C, \gamma$, uniformly in $x,v,t$ and in $\epsilon$.  	 
 Assume that as $\epsilon \to 0$, the functions  
 $ (f^\epsilon_+,f^\epsilon_-, \phi^\epsilon)$ converge in a weak sense.
 Then, the limit  $ (\overline {f_+}, \overline {f_-},  \phi)$ are local Maxwellians of the form 
 
 \begin{equation}
 \begin{aligned}
  \overline{f_+}(x,v,t) &=n_I(x,t)\Big (\frac1{2\pi \theta_I}\Big)^{\frac{d}2} e^{-\frac{ |v-u_I|^2}{2 \theta_I} },
  \\
 \overline{f_-}(x,v,t)&=n_e(x,t)\Big (\frac{\beta }{2\pi}\Big)^{\frac{d}2} e^{-\beta \frac{ |v|^2}2}, \qquad n_e(x,t) = e^{\beta \phi(x)}
\end{aligned} \end {equation}
in which $(n_I(x,t), u_I(x,t),\theta_I(x,t))$ and  $(\beta (t) ,\phi (x,t) )$ solve the following compressible Euler-Poisson system
 \begin{equation}
 \begin{aligned}\label{Euler-Poisson}
 &\del_t n_I + \nabla  \cdot (n_I u_I)=0,\\
 &\del_t (n_I u_I)+ \nabla\cdot ( n_I u_I\otimes u_I ) +  \nabla (n_I \theta_I) + n_I \nabla \phi =0, \\
  & \del_t \Big (n_I(\frac{|u_I|^2}2 + \frac{d}{2}\theta_I)\Big) + \nabla\cdot \Big( n_I u_I \Big (\frac{|u_I|^2}2 + \frac{d+2}{2}\theta_I\Big) \Big) +
  n_I u_I \cdot \nabla \phi=0,\\
  &-\lambda _{D}^{2}\Delta \phi+ e^{\beta \phi}= n_I ,\\
  &  \frac{ m_0 d}{2 \beta} +\int_{\Omega} n_I(x,t)\Big (\frac{|u_I|^2}2 + \frac d 2 \theta_I\Big) \; dx+ \frac {\lambda _{D}^{2}} 2\int_\Omega |\nabla \phi(x,t)|^2\; dx = \cE_0  .
  \end{aligned}
 \end{equation}
  \end{proposition}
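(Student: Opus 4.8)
The plan is to combine three ingredients: (i) the entropy dissipation estimates for each species, which under the stated scaling force the weak limits to be local Maxwellians; (ii) the \emph{exact} moment identities \eqref{massev}, \eqref{uion}, \eqref{enion} together with the global energy balance \eqref{def-E0}, which survive the limit and, once the Maxwellian structure is known, close into the fluid system \eqref{Euler-Poisson}; and (iii) the uniform decay bound $|f_\pm^\epsilon|\le Ce^{-|v|^\gamma}$, which renders all velocity moments uniformly equi-integrable so that weak convergence of $f_\pm^\epsilon$ yields convergence of each $\la\psi(v)f_\pm^\epsilon\ra$ for polynomially growing $\psi$. A structural point worth isolating at the outset is that the electron time derivative enters \eqref{Vlaso2} only at order $\epsilon$ and hence drops out, turning the electron balance into a constraint (the Maxwell--Boltzmann relation), whereas the ion time derivative in \eqref{Vlaso22} is $O(1)$ and survives, producing a genuinely evolutionary fluid system.

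The electron part is identical to Proposition~\ref{theo-formal} and I would simply invoke it. For completeness: the entropy identity \eqref{id-entropy} combined with \eqref{htheorem} gives
\[
\frac{\eta_\epsilon}{\epsilon}\int_0^T\!\!\int_\Omega \big(-\la Q(f_-^\epsilon)\log f_-^\epsilon\ra\big)\,dx\,dt = \int_\Omega \la f_-^\epsilon(0)\log f_-^\epsilon(0)\ra\,dx - \int_\Omega \la f_-^\epsilon(T)\log f_-^\epsilon(T)\ra\,dx,
\]
whose right-hand side is bounded uniformly in $\epsilon$ by the decay bound and the conserved mass and energy. Since $\eta_\epsilon/\epsilon\to\infty$, the dissipation tends to $0$, so by the H-theorem $\overline{f_-}$ is a local Maxwellian. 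Writing \eqref{Vlaso2} as $v\cdot\nabla_x f_-^\epsilon+\nabla_x\phi^\epsilon\cdot\nabla_v f_-^\epsilon = \eta_\epsilon Q(f_-^\epsilon)-\epsilon\partial_t f_-^\epsilon$ and noting that the right-hand side tends to $0$ (as $\epsilon\to0$, $\eta_\epsilon$ stays bounded, and $Q(\overline{f_-})=0$), the limit obeys the stationary transport equation $v\cdot\nabla_x\overline{f_-}+\nabla_x\phi\cdot\nabla_v\overline{f_-}=0$, i.e. $\overline{f_-}$ is constant along the Hamiltonian flow of $\tfrac12|v|^2-\phi$. A Maxwellian invariant under this flow must have zero bulk velocity and a spatially constant temperature $\theta_-=1/\beta(t)$; using the non-axisymmetry of $\Omega$ to exclude additional flow invariants, one arrives at $\overline{f_-}=e^{\beta\phi}(\beta/2\pi)^{d/2}e^{-\beta|v|^2/2}$, which is \eqref{MBBB}.

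For the ions, the new point here, I would first record the ion entropy inequality: multiplying \eqref{Vlaso22} by $\log f_+^\epsilon$ and integrating, the transport term vanishes by the specular boundary condition and the force term vanishes because $\la\nabla_v(f_+\log f_+-f_+)\ra=0$, leaving $\tfrac{d}{dt}\int\la f_+^\epsilon\log f_+^\epsilon\ra\,dx=\sigma_\epsilon\int\la Q_+(f_+^\epsilon)\log f_+^\epsilon\ra\,dx\le 0$ by the ion H-theorem (I assume throughout that $Q_+$ shares the structural properties \eqref{Q-property}--\eqref{htheorem}). As for the electrons this bounds $\sigma_\epsilon\int_0^T\!\!\int(-\la Q_+(f_+^\epsilon)\log f_+^\epsilon\ra)$ uniformly, and since $\sigma_\epsilon\to\infty$ the dissipation vanishes in the limit, forcing $\overline{f_+}$ to be a local Maxwellian $n_I(2\pi\theta_I)^{-d/2}e^{-|v-u_I|^2/(2\theta_I)}$ with $(n_I,u_I,\theta_I)$ depending on $(x,t)$. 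Because $Q_+$ conserves mass, momentum and energy, the exact ion moment equations \eqref{massev}, \eqref{uion}, \eqref{enion} carry no collisional source; passing them to the limit and inserting the closed Maxwellian moments $\la\overline{f_+}\ra=n_I$, $\la v\overline{f_+}\ra=n_Iu_I$, $\la v\otimes v\,\overline{f_+}\ra=n_I(u_I\otimes u_I+\theta_I\,\mathrm{Id})$ and $\la v\tfrac{|v|^2}{2}\overline{f_+}\ra=n_Iu_I(\tfrac{|u_I|^2}{2}+\tfrac{d+2}{2}\theta_I)$ yields the first three equations of \eqref{Euler-Poisson}. The Poisson equation \eqref{Poisson} passes to $-\lambda_D^2\Delta\phi+e^{\beta\phi}=n_I$, and the global energy identity \eqref{def-E0}, using $\int n_e\,dx=m_0$ and $\la\tfrac{|v|^2}{2}\overline{f_-}\ra=n_e\,\tfrac{d}{2\beta}$, gives the final constraint equation of \eqref{Euler-Poisson}.

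The step I expect to require the most care is the passage to the limit in the nonlinear force terms $\nabla_x\phi^\epsilon\cdot\la\psi(v)f_+^\epsilon\ra$, which are products of two only weakly converging factors. I would resolve this by gaining compactness from the Poisson equation: the decay bound gives $\la f_\pm^\epsilon\ra$ bounded in $L^\infty_x$, hence by elliptic regularity $\phi^\epsilon$ is bounded in $W^{2,p}_x$, and by Rellich compactness on the bounded domain $\Omega$ one obtains $\nabla_x\phi^\epsilon\to\nabla_x\phi$ \emph{strongly} in $L^2_x$; a strong-times-weak argument then identifies each product limit. The remaining delicate ingredient, inherited from Proposition~\ref{theo-formal} rather than new to this statement, is the electron rigidity step, namely that flow-invariance together with the Maxwellian form forces $u_-=0$ and an $x$-independent temperature, which is precisely where the geometric hypothesis on $\Omega$ enters.
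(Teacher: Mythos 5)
Your proposal is correct and follows essentially the same route as the paper: entropy dissipation under the scalings $\eta_\epsilon/\epsilon\to\infty$ and $\sigma_\epsilon\to\infty$ forces both limits to be local Maxwellians via the H-theorem, and the moment equations for $f_+^\epsilon$ close into \eqref{Euler-Poisson} once the Maxwellian fluxes are inserted. The extra care you devote to the weak-weak products $\nabla_x\phi^\epsilon\,\la\psi(v)f_+^\epsilon\ra$ and to stating explicitly that $Q_+$ must satisfy \eqref{Q-property}--\eqref{htheorem} goes beyond the paper's (admittedly formal) write-up, but it is an elaboration of the same argument rather than a different one.
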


For the proofs, we shall use the following lemma (cf. \cite{DV02} or \cite[Proposition 13]{DV05} for discussions on more general setting). 

\begin{lemma}[Korn's inequality]\label{lem-Korn} Let $\Omega $ be a smooth bounded subset of $\mathbb{R}^d$, $d\ge 2$. Then, there exists a constant $\overline K (\Omega)>0$ such that for any  vector fields $u:Ê\Omega \mapsto \mathbb{R}^d$, one has
 \begin{equation} \label{korn}
 \Big \| \frac{\nabla u + \nabla u^t}2\Big \|_{L^2(\Omega)} \ge \overline K(\Omega)  \inf_{R\in \mathcal R (\Omega)} \|\nabla(u-R)\|^2_{L^2(\Omega)} ,
 \end{equation}
in which $\mathcal R(\Omega)$ denotes the space that consists of all affine maps $R: \Omega \mapsto \mathbb{R}^d$ whose linear part is anti-symmetric.  In particular, if $\Omega$ is non-axisymmetric and if $u\cdot n = 0$ on $\partial\Omega$, then the Korn's inequality \eqref{korn} holds for $R\equiv 0$.  
\end{lemma}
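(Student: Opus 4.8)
The plan is to argue by contradiction and compactness, with the entire difficulty concentrated in one functional-analytic lemma. Write $e(u):=\tfrac12(\nabla u+\nabla u^t)$ for the symmetrized gradient, with components $e_{jk}(u)=\tfrac12(\partial_j u_k+\partial_k u_j)$, and recall that on the connected set $\Omega$ the kernel of $e$ is exactly $\mathcal R(\Omega)$: if $e(R)=0$ with $R$ affine then its linear part is antisymmetric, so $R(x)=Ax+b$ with $A^t=-A$. Since $e(u-R)=e(u)$ for every $R\in\mathcal R(\Omega)$, the asserted inequality is equivalent to saying that the seminorm $\|e(u)\|_{L^2(\Omega)}$ controls $\inf_{R}\|\nabla(u-R)\|_{L^2(\Omega)}$, i.e. that $e$ coerces the $\nabla$-seminorm on the quotient $H^1(\Omega)^d/\mathcal R(\Omega)$.

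First I would suppose the inequality fails: there is a sequence $u_n\in H^1(\Omega)^d$ for which, after subtracting an element of $\mathcal R(\Omega)$ and rescaling, $\|\nabla u_n\|_{L^2}=1$ while $\|e(u_n)\|_{L^2}\to 0$. One may fix the representative by imposing $\int_\Omega u_n\,dx=0$ and $\int_\Omega(\nabla u_n-\nabla u_n^t)\,dx=0$, which removes the translational and rotational ambiguity of $\mathcal R(\Omega)$ and makes $\|\nabla\cdot\|_{L^2}$ a genuine norm on the complement. By the Poincar\'e--Wirtinger inequality these normalizations together with $\|\nabla u_n\|_{L^2}=1$ give a uniform bound on $\|u_n\|_{H^1}$, so Rellich's theorem yields a subsequence with $u_n\to u$ strongly in $L^2(\Omega)^d$ and weakly in $H^1$. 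Passing to the weak limit in $e(u_n)\to 0$ gives $e(u)=0$, hence $u\in\mathcal R(\Omega)$; the two integral normalizations pass to the limit and force $u=0$.

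The crux — and the step I expect to be the main obstacle — is to upgrade this to strong convergence of the \emph{full} gradient, since weak $H^1$ convergence with $e(u_n)\to 0$ controls only the symmetric part. This is where the Lions--Ne\v{c}as lemma enters: if $v\in\mathcal D'(\Omega)$ satisfies $v\in H^{-1}(\Omega)$ and $\nabla v\in H^{-1}(\Omega)^d$, then $v\in L^2(\Omega)$ with $\|v\|_{L^2}\le C(\Omega)\big(\|v\|_{H^{-1}}+\|\nabla v\|_{H^{-1}}\big)$. Applying it to $v=\partial_j(u_n-u_m)_k$ and using the algebraic identity $\partial_i\partial_j u_k=\partial_i e_{jk}(u)+\partial_j e_{ik}(u)-\partial_k e_{ij}(u)$ to bound $\|\nabla v\|_{H^{-1}}\le C\|e(u_n-u_m)\|_{L^2}$, one obtains $\|\nabla(u_n-u_m)\|_{L^2}\le C\big(\|u_n-u_m\|_{L^2}+\|e(u_n-u_m)\|_{L^2}\big)$. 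The right-hand side tends to $0$, so $\nabla u_n$ is Cauchy in $L^2$ and converges strongly to $\nabla u=0$, contradicting $\|\nabla u_n\|_{L^2}=1$. This proves the general inequality.

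For the last assertion I would rerun the argument under the constraint $u\cdot n=0$ on $\partial\Omega$, aiming for $R\equiv 0$. The only change is in the compactness step, where I cannot subtract the mean without breaking the constraint: instead I recover a bound on $\frac{1}{|\Omega|}\int_\Omega u_n\,dx$ directly from the divergence theorem, testing $u_n\cdot n=0$ against the coordinate fields to get $\int_\Omega (u_n)_j\,dx=-\int_\Omega x_j\,\nabla\cdot u_n\,dx$, which is controlled by $\|\nabla u_n\|_{L^2}$. Thus $u_n$ is again bounded in $H^1$, and the limit $u=R\in\mathcal R(\Omega)$ inherits $R\cdot n=0$ on $\partial\Omega$ because the trace operator is compact from $H^1$ into $L^2(\partial\Omega)$. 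A nonzero antisymmetric part of $R$ would generate a one-parameter group of rotations preserving $\Omega$, contradicting non-axisymmetry, while a nonzero translation $b$ would require $b\cdot n\equiv 0$, impossible for a bounded domain whose boundary normals span $\mathbb R^d$; hence $R=0$, and the Lions--Ne\v{c}as step closes the contradiction exactly as before.
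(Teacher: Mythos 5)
Your proof is correct, but note that the paper does not actually prove this lemma: it is quoted from the literature (the references \cite{DV02} and \cite[Proposition 13]{DV05} cited just before the statement), where Desvillettes and Villani establish the inequality \emph{constructively}, with explicit constants, because their entropy--entropy dissipation program requires quantitative control. What you give instead is the classical qualitative route (Duvaut--Lions/Ne\v{c}as): contradiction plus Rellich compactness, with the key upgrade from weak to strong convergence of the full gradient handled by the Lions--Ne\v{c}as lemma through the identity $\partial_i\partial_j u_k=\partial_i e_{jk}(u)+\partial_j e_{ik}(u)-\partial_k e_{ij}(u)$. This buys brevity and standardness at the price of a non-explicit constant $\overline K(\Omega)$ --- harmless for the present paper, which only uses the qualitative conclusion $\nabla u_-=0$ in the proof of Proposition \ref{theo-formal}. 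Your handling of the tangency case is also sound: the divergence-theorem identity $\int_\Omega (u_n)_j\,dx=-\int_\Omega x_j\,\nabla\cdot u_n\,dx$ to bound the mean without breaking the constraint, compactness of the trace $H^1(\Omega)\to L^2(\partial\Omega)$ to pass $u\cdot n=0$ to the limit, and the normalization $\int_\Omega u_n=0$, $\int_\Omega(\nabla u_n-\nabla u_n^t)=0$, under which the infimum over $\mathcal R(\Omega)$ is indeed attained at $R=0$, so fixing $\|\nabla u_n\|_{L^2}=1$ is consistent with the negated inequality.

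Two small points deserve mention. First, the inequality \eqref{korn} as printed is dimensionally inhomogeneous (first power on the left, square on the right); replacing $u$ by $\lambda u$ and letting $\lambda\to\infty$ shows it can only hold in the scale-invariant form you actually proved (or with both sides squared, as in \cite{DV02}) --- you silently corrected a typo, which is the right reading. Second, in the rigidity step your phrase ``one-parameter group of rotations'' elides a case: a limit $R(x)=Ax+b$ with $A\neq 0$ tangent to $\partial\Omega$ generates a priori a group of \emph{screw} motions; one should add the one-line observation that the translational component of $b$ along $\ker A$ must vanish because the flow preserves the bounded set $\Omega$, after which the flow is a genuine rotation group and non-axisymmetry gives $A=0$; then $b\cdot n\equiv 0$ is impossible for $b\neq 0$ since the outward normals of a bounded smooth boundary span $\mathbb{R}^d$ (supporting-hyperplane argument). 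With these remarks, your argument is a complete and legitimate alternative to the cited quantitative proof.
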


 \begin{proof}[Proof of proposition \ref{theo-formal}] 
 We first prove that $\overline{f_-}$ is of the form of a local Maxwellian. Indeed, by a view of \eqref{id-entropy}, together with the assumption $\lim_{\epsilon \to0} \eta_\epsilon \epsilon^{-1} = \infty$, we obtain in the limit 
$$
 \int_0^T \iint_{\Omega\times \bR^d} \cQ(\overline{f_-}) \log \overline {f_-} \; dvdxdt  = 0.$$
By the H-theorem, $\overline{f_-}$ is a local Maxwellian of the form 
$$ \overline{f_-}(x,v,t)=n_e \Big (\frac{\beta}{2\pi}\Big)^{\frac{d}2} e^{-\beta\frac{ |v - u_-|^2}2}
$$
in which $(n_e, u_-, \beta)$ depend on $(x,t)$. In particular, $\cQ(\overline{f_-}) = 0$. By a view of \eqref{scaling}, the Vlasov-Boltzmann equation for $\overline{f_-}$ in the limit of $\epsilon \to 0$ becomes 
\begin{equation}\label{Vf-} v \cdot \nabla_x \overline{f_-}  + \nabla_x \cdot \nabla_v \overline{f_-} = 0, \qquad \forall ~ (x,v) \in \Omega \times \bR^d.\end{equation}
Direct computations yield 
$$
v \cdot \nabla_x \overline{f_-} =  v\cdot \Big[ \nabla \log n_e - \frac d2\nabla\beta  + \frac{\beta|v-u_-|^2}{2}\nabla \beta +\beta  \sum_k (v_k - u_{k,-})\nabla u_{k,-} \Big] 
\overline{f_-} $$
and 
$$  \nabla_x \phi \cdot \nabla_v \overline{f_-} = -  \beta \nabla_x \phi  \cdot (v-u_-) \overline{f_-}.$$
We write \eqref{Vf-} as a polynomial with variable $v-u$, and set its coefficients to be zero. From the cubic term, we get $\nabla \beta =0$ and so $\beta = \beta(t)$.  The quadratic term is 
$$ \overline{f_-} \beta[(v-u_-)\otimes (v-u_-)]: \frac{\nabla u _-+ \nabla u_-^t}{2}  = \overline{f_-} \beta \sum_{jk} (v_j - u_{j,-}) (v_k - u_{k,-}) \frac{\partial_{x_j} u_{k,-} + \partial_{x_k} u_{j,-}}{2}$$
which implies that $\nabla u_- + \nabla u_-^t =0$. In addition, since $\overline{f_-}$ is an even function with respect to variable $v-u_-$, we get 
\begin{equation}\label{def-u} u_-(x,t) = \frac{1}{n_e(x,t)}\la  v \overline{f_-}(x,v,t) \ra .\end{equation}
This gives $u_-\cdot n =0$ on $\partial\Omega$, thanks to the specular boundary condition on $\overline{f_-}$. By Korn's inequality, $\nabla u_- =0$ and so $u_-=0$. The equation \eqref{Vf-} simply reduces to 
$$
\begin{aligned}
0 &=  \nabla \log n_e - \beta \nabla_x \phi. 
\end{aligned}$$
This proves that $n_e(x,t) = e^{\beta (t)\phi(x,t)}$ and $\overline{f_-}(x,v,t)$ is of the form as claimed. This completes the proof.
 \end{proof}

 \begin{proof}[Proof of proposition \ref{the2}] 
The proof is similar, yielding the same Maxwellian for $\overline{f_-}$. In addition, the assumption $ \lim_{\epsilon \to 0} \sigma_\epsilon= \infty$ implies that  $\overline{f_+}$ is  also a  local Maxwellian, as claimed. The macroscopic equations (\ref{Euler-Poisson}) are obtained by taking the moments of $\overline{f_+}$, upon recalling that 
$$ 
n_I = \la f_+ \ra, \qquad n_I u_I = \la v f_+ \ra, \qquad  n_I(\frac{|u_I|^2}2 + \frac{d}{2}\theta_I) = \la \frac{|v|^2}{2} f\ra .
$$
Indeed, same relations hold for $f_+^\epsilon$. By multiplying the Vlasov-Boltzmann equation for $f_+^\epsilon$ by $1, v$ and $\frac{|v|^2}{2}$ and integrating over $\mathbb{R}^d$ with respect to $v$, we obtain the following local conservation laws, respectively
$$\partial_t n_I^\epsilon + \nabla_x \cdot (n_I^\epsilon u_I^\epsilon)  =0,$$
$$\partial_t(n_I^\epsilon u_I^\epsilon)+ \nabla_x \cdot \la v \otimes v f_+^\epsilon \ra + n_I^\epsilon \nabla_x \phi  
=0,$$
$$\partial_t \Big[ n^\epsilon_I(\frac{|u^\epsilon_+|^2}2 + \frac{d}{2}\theta^\epsilon_+)\Big]  + \nabla_x\cdot \la v \frac{|v|^2}{2} f_+^\epsilon \ra  +  n_I^\epsilon u_I^\epsilon \cdot \nabla_x \phi=0.$$
Passing to the limit of $\epsilon \to 0$ and using the fact that the limiting distribution $\overline{f_+}$ is the Maxwellian (which is an even function in $v - u_I$), we compute 
$$
\begin{aligned}
  \nabla_x \cdot \la v \otimes v \overline{f_+} \ra & = \nabla_x \cdot \la u_I \otimes u_I \overline{f_+} \ra +  \nabla_x \cdot \la (v-u_I) \otimes (v-u_I) \overline{f_+} \ra
\\&= \nabla_x \cdot (n_I u_I \otimes u_I )+   \nabla_x (n_I \theta_I).
\end{aligned}   $$
Similarly, repeatedly using the evenness of $\overline{f_+}$ in $v-u_I$, we compute 
$$\begin{aligned}
\nabla_x \cdot \la v \frac{|v|^2}{2} \overline{f_+} \ra 
&= \nabla_x \cdot   \la u_I\frac{|v|^2}{2} \overline{f_+} \ra + \nabla_x \cdot  \la (v-u_I) \Big[ \frac{|v-u_I|^2}{2} + u_I \cdot v - \frac{|u_I|^2}{2}\Big] \overline{f_+} \ra
\\
&= \nabla_x \cdot  \Big( n_I u_I (\frac{|u_I|^2}2 + \frac{d}{2}\theta_I) \Big)+ \nabla_x \cdot  \la (v-u_I) u_I \cdot (v-u_I) \overline{f_+} \ra
\\
&= \nabla_x \cdot  \Big( n_I u_I (\frac{|u_I|^2}2 + \frac{d}{2}\theta_I) \Big)+ \frac{2}{d}\nabla_x \cdot  \la u_I \frac{|v-u_I|^2}{2} \overline{f_+} \ra
\\
&= \nabla_x \cdot  \Big( n_I u_I (\frac{|u_I|^2}2 + \frac{d}{2}\theta_I) \Big)+ \nabla_x \cdot  (n_Iu_I\theta_I ).
  \end{aligned}$$
This yields \eqref{Euler-Poisson}, and thus completes the proof of the theorem. 
 \end{proof}
\begin{remark}
{\em Letting  $\lambda_D\rightarrow 0$ in (\ref{Poisson}) or in its avatars (\ref{MB}) and (\ref{Euler-Poisson}) corresponds to the so called quasi-neutral approximation and leads formally to the relation 
\begin{equation}
\beta\nabla \phi \simeq\nabla (\log n_I) \label{quasineutre}\,.
\end{equation}
From (\ref{quasineutre}), one may deduce the formula
\begin{equation}
n_I \nabla \phi\simeq \nabla ( n_I \beta ^{-1} )\label{electrons}
\end{equation}
which means that the gradient of potential is the gradient of the electrons pressure.
The approximations (\ref{quasineutre}) and (\ref{electrons}) are well established at the level of physics (cf. \cite{SePlasma}). On the other hand the mathematical (with full rigor) justification of (\ref{quasineutre}) is the object of many recent works (cf. for instance \cite{HK1,HKM,HKN,HK3} and the references therein).
}\end{remark}


 
%

\section{Analysis of electrons system when the ions density is frozen} In
 this section,  the ions density $n_I (x)$ and  the kinetic energy of ions are taken independent of the time. For sake of presentation, we take the Debye length $\lambda_D$ equal to $1$.  
 
\subsection{Determination of the electrons temperature}
 In view of the formal derivation in the previous section with the time dependence only through the dynamics of ions, we study the stationary equation for electrons (denoting the electrons density distribution $f_- = f_- (x,v)$): 
 \begin{equation}\label{E-MB}
  \begin{aligned}
  v\cdot \nabla_x f_- + \nabla_x \phi \cdot\nabla_v f_- &=\eta \cQ (f_-) , \qquad \eta>0
  \\
  -\Delta \phi + \la f_-(x, \cdot)\ra &= n_I (x), \qquad  \frac{\partial \phi}{\partial n}_{\vert_{\partial \Omega}} = 0
  \end{aligned}\end{equation}
together with the specular boundary condition for $f_-$ on $\partial \Omega$, and the mass and energy constraints 
\begin{equation}\label{def-ME} 
\begin{aligned}
\int_\Omega  \la  f_-(x)\ra dx = \int_\Omega n_I (x) \; dx &=& m_0
\\
\int_\Omega  \la \frac{|v|^2}{2}  f_-(x)\ra dx + \frac12 \int_\Omega |\nabla \phi|^2 \; dx &=& \cE_1
\end{aligned}\end{equation}
for some fixed positive $\cE_1 =\cE_0 - \int  \la \frac{|v|^2}{2}  f_+ (x)\ra dx$. 

\begin{theorem}\label{theo-MB}
Let $\Omega$ be smooth, bounded, and non-axisymmetric, and let $f_-(x,v) $ be a solution to the Vlasov-Boltzmann equation \eqref{E-MB}. Assume that $f_-$ is continuous and rapidly decaying, and $-\log f_-$ has polynomial growth in $v$, as $v\to \infty$. 
Then $ f_-$  is given by the formula:
\begin{equation}\label{def-M}
f_- (x,v) = \Big(\frac{\beta}{2\pi}\Big )^{d/2} e^{-\beta (\frac{|v|^2}2 -\phi(x))} 
\end{equation}
with $\beta>0$ being $x-$independent and $\phi$ solution of the following elliptic problem
\begin{equation}
-\Delta \phi + e^{\beta \phi} = n_I (x), \qquad \, \quad \frac{\partial \phi }{\partial n}_{\vert_{\partial \Omega}}=0 . \label{Maxbol1}
\end{equation} 
\end{theorem}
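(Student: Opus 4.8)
The plan is to follow the stationary analogue of the proof of Proposition \ref{theo-formal}, the only genuinely new ingredient being an entropy-dissipation identity valid for fixed $\eta>0$ rather than in a limit. First I would show that $f_-$ is a local Maxwellian by multiplying the Vlasov--Boltzmann equation in \eqref{E-MB} by $\log f_-$ and integrating over $\Omega\times\mathbb{R}^d$. Since the transport field $v\cdot\nabla_x+\nabla_x\phi\cdot\nabla_v$ is divergence-free in $(x,v)$, the transport term equals $(v\cdot\nabla_x+\nabla_x\phi\cdot\nabla_v)(f_-\log f_- - f_-)$; the $v$-divergence integrates to zero by rapid decay, while the $x$-divergence leaves a boundary flux $\int_{\partial\Omega}\la (v\cdot n)(f_-\log f_- - f_-)\ra\,dS$. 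Because $f_-$ satisfies the specular condition \eqref{bdry-specular}, the integrand $f_-\log f_- - f_-$ is invariant under $v\mapsto v-2(v\cdot n)n$ while $v\cdot n$ is odd, so this flux vanishes pointwise on $\partial\Omega$. Hence $\eta\int_\Omega\la Q(f_-)\log f_-\ra\,dx=0$, and since $\eta>0$ the H-theorem \eqref{htheorem} forces $\la Q(f_-)\log f_-\ra=0$ for a.e.\ $x$; the equality case then gives that $f_-(x,\cdot)$ is a local Maxwellian and $Q(f_-)\equiv 0$.

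With $Q(f_-)=0$ the equation collapses to the purely kinetic transport identity $v\cdot\nabla_x f_- + \nabla_x\phi\cdot\nabla_v f_- = 0$, which is exactly \eqref{Vf-}. From here the argument is identical to that of Proposition \ref{theo-formal}: writing $f_-=n_e(\beta/2\pi)^{d/2}e^{-\beta|v-u|^2/2}$, expanding the identity as a polynomial in $v-u$ and annihilating each coefficient yields, in order, $\nabla\beta=0$ (so $\beta$ is a genuine constant, there being no time variable), then $\nabla u+\nabla u^t=0$, and finally $\nabla\log n_e=\beta\nabla\phi$. The specular condition gives $u\cdot n=0$ on $\partial\Omega$ through \eqref{def-u}, so Lemma \ref{lem-Korn} (the non-axisymmetric case, $R\equiv 0$) forces $\nabla u=0$ and hence, since a constant field with $u\cdot n=0$ on all of $\partial\Omega$ must vanish, $u\equiv 0$.

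Integrating $\nabla\log n_e=\beta\nabla\phi$ gives $n_e=Ce^{\beta\phi}$; the additive constant in $\phi$ (free under the Neumann condition) can be fixed so as to absorb $C$, yielding $n_e=e^{\beta\phi}$ and the stated form \eqref{def-M}, whereupon the Poisson equation in \eqref{E-MB} becomes the Poisson--Poincar\'e equation \eqref{Maxbol1}. Positivity $\beta>0$ is forced by the integrability of $f_-$ in $v$.

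The main obstacle is the rigorous justification of the entropy identity under only the stated regularity (continuity, rapid decay, and polynomial growth of $-\log f_-$): one must ensure $\log f_-$ is integrable against the transported density and, above all, that the specular boundary flux genuinely vanishes, which requires the trace of $f_-$ on $\partial\Omega\times\mathbb{R}^d$ to be well-defined and the reflection symmetry to hold there. The polynomial-growth hypothesis on $-\log f_-$ is precisely what makes $\la Q(f_-)\log f_-\ra$ and all velocity moments finite, thereby legitimizing both the equality case of the H-theorem and the subsequent coefficient-matching in $v-u$.
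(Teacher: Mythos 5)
Your proposal is correct and follows essentially the same route as the paper, which simply declares the proof ``identical to that of Proposition \ref{theo-formal}'': local Maxwellian via vanishing entropy dissipation and the H-theorem, then coefficient-matching in $v-u$ and Korn's inequality to kill $u$ and identify $n_e=e^{\beta\phi}$. Your one genuine addition --- deriving $\iint Q(f_-)\log f_-=0$ directly from the stationary entropy balance with the specular boundary flux cancelling, rather than from the $\eta_\epsilon\epsilon^{-1}\to\infty$ limit --- is exactly the adaptation the paper leaves implicit, and it is carried out correctly.
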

\begin{proof} The proof is identical to that of Theorem \ref{theo-formal} in deriving the form of Maxwellian for electrons. 
\end{proof}

 With $f_- $ being the Maxwellian defined as in \eqref{def-M}, a direct computation yields 
$$ \la f_- (x) \ra  = e^{\beta \phi(x)}, \qquad 
\int \la \frac{|v|^2}{2} f_- \ra dx = \frac{m_0d}{2 \beta}.$$

\begin{remark}\label{rem-torus}
{\em In the case when $\Omega$ is axisymmetric, nonzero macroscopic velocity is allowed. For instance, when $\Omega = Q \times \mathbb{T}^k$  with $Q\subset \mathbb{R}^{d-k}$ being non axisymmetric, the failure of the Korn's inequality yields the following from of Maxwellian for $f(x,v)$ 
$$
f_- (x,v) = \Big(\frac{\beta}{2\pi}\Big )^{d/2} e^{-\beta \frac{|v - u|^2}2 } e^{\beta \phi(x)} 
$$
in which $u = (0, u_k)$ is a vector constant in $\mathbb{R}^{d-k}\times \mathbb{R}^k$. Necessarily, $\phi(x)$ is constant along the velocity field $u$. That is, $u \cdot \nabla \phi =0$.}
\end{remark}



\begin{remark}\label{rem-rtorus}
{\em 
Consider $\Omega$ to be a solid torus, defined by 
\begin{equation}\label{def-solidtorus}\Omega = \Big\{x = (x_1,x_2,x_3)\in \mathbb{R}^3~:~ \Big(a - \sqrt{x_1^2+x_2^2}\Big)^2 + x_3^2 < 1 \Big\}, \qquad a>1,\end{equation}
which can be parametrized with the following toroidal coordinates $(r,\theta,\varphi)$:
$$\begin{aligned}
 x_1 = (a+ r\cos \theta ) \cos \varphi , \quad x_2 = (a+r \cos \theta) \sin \varphi, \quad x_3=  r \sin \theta.\end{aligned}$$
Here, $0\le r\le 1$ is the radial coordinate in the minor cross-section, $0\le \theta<2\pi$ is the poloidal angle, 
and $0\le \varphi <2\pi$ is the toroidal angle. Let  $e_\varphi$ be the toroidal direction with respect to the angle $\varphi$. Then, 
the Maxwellian of $f(x,v)$ is of the form 
$$
f_- (x,v) = \Big( \frac{\beta}{ 2\pi } \Big)^{3/2}  e^{- \beta\frac{ |v - u_\varphi e_\varphi |^2}{2}} e^{\beta \phi (x)}, 
$$
for $u_\varphi = \gamma_\varphi (a+r\cos \theta)$, with $\gamma_\varphi$ being a constant, which can be determined from the conservation of angular momentum along the toroidal direction; see \cite{NStr}. 
}
\end{remark}


%

To determine $\beta$, we prove the following theorem.

\begin{theorem}\label{theo-beta} Let $\Omega$ be a bounded domain and $\cE_1 >0$. Fix a nonnegative ion density $n_I (x) \in L^2(\Omega)$ with finite mass $m_0$. Then, there exists a unique solution $(\beta, \phi)$ to the following elliptic problem:
 \begin{equation}\label{uni-MB}
-\Delta \phi + e^{\beta \phi} = n_I (x), \qquad \, \quad \frac{\partial \phi }{\partial n}_{\vert_{\partial \Omega}}=0 
\end{equation} 
together with the mass and energy constraints 
\begin{equation}\label{uni-ME} 
\begin{aligned}
\int_{\Omega} e^{\beta \phi} \; dx & = \int_\Omega n_I (x) \; dx = m_0,
\\
\cE(\beta): = \frac{m_0 d}{2\beta} &+ \frac12 \int_\Omega |\nabla \phi|^2 \; dx = \cE_1 .
\end{aligned}\end{equation}
\end{theorem}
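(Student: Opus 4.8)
The plan is to decouple the problem into an inner elliptic solve at a fixed inverse temperature and an outer scalar equation that fixes $\beta$ through the energy. \emph{Step 1 (the elliptic problem at fixed $\beta$).} For each fixed $\beta>0$ I would produce $\phi_\beta$ as the unique minimizer over $H^1(\Omega)$ of the strictly convex functional
\begin{equation*}
J_\beta(\phi)=\frac12\int_\Omega|\nabla\phi|^2\,dx+\frac1\beta\int_\Omega e^{\beta\phi}\,dx-\int_\Omega n_I\phi\,dx .
\end{equation*}
Coercivity follows by splitting $\phi=\bar\phi+\phi_0$ into its mean and mean-zero parts: the term $\frac1\beta\int e^{\beta\phi}$ controls $\bar\phi\to+\infty$, the linear term $-m_0\bar\phi$ controls $\bar\phi\to-\infty$ (here $m_0>0$ is used), and Poincar\'e's inequality controls $\phi_0$ through $\|\nabla\phi\|_{L^2}$. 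Strict convexity of the $e^{\beta\phi}$ term gives uniqueness, and the Euler--Lagrange equation is exactly \eqref{uni-MB}. Integrating \eqref{uni-MB} over $\Omega$ and using the Neumann condition makes the first constraint in \eqref{uni-ME} automatic, $\int_\Omega e^{\beta\phi_\beta}=\int_\Omega n_I=m_0$. Elliptic regularity (bootstrapping from $n_I\in L^2$) and the implicit function theorem applied to $(\beta,\phi)\mapsto-\Delta\phi+e^{\beta\phi}-n_I$, whose linearization $-\Delta+\beta e^{\beta\phi}$ is coercive, show that $\beta\mapsto\phi_\beta$ is continuous (indeed $C^1$) into $H^1(\Omega)$. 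Since the mass constraint is thus automatic, it remains to find $\beta$ with $\cE(\beta)=\cE_1$, where $\cE(\beta)=\frac{m_0d}{2\beta}+\frac12\|\nabla\phi_\beta\|_{L^2}^2$ is continuous; I would prove that $\cE:(0,\infty)\to(0,\infty)$ is a strictly decreasing bijection, which settles existence and uniqueness of $\beta$ for every $\cE_1>0$.

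\emph{Step 2 (monotonicity via a concave free energy).} The cleanest route to strict monotonicity is to lift the problem back to the kinetic level. For $f=f(x,v)\ge0$ with $\int\!\!\int f=m_0$, set $H(f)=\int_\Omega\la f\log f\ra\,dx$, let $\phi_f$ solve $-\Delta\phi_f=n_I-\la f\ra$ with Neumann data, and let $E(f)=\int_\Omega\la\frac{|v|^2}2 f\ra\,dx+\frac12\|\nabla\phi_f\|_{L^2}^2$ be the total energy, which is \emph{convex} in $f$ since the field part is a positive quadratic form in $\la f\ra$. Define
\begin{equation*}
\Phi(\beta)=\inf\Big\{H(f)+\beta E(f)\ :\ f\ge0,\ \textstyle\int\!\!\int f=m_0\Big\}.
\end{equation*}
The minimizer is unique by strict convexity of $H$, and its Euler--Lagrange equation forces the Maxwell--Boltzmann form $f_\beta=(\beta/2\pi)^{d/2}e^{\beta\phi_\beta}e^{-\beta|v|^2/2}$ with $\phi_\beta$ solving \eqref{uni-MB}; hence $E(f_\beta)=\cE(\beta)$. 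As an infimum of functions \emph{affine} in $\beta$, $\Phi$ is concave, and the envelope (Danskin) theorem gives $\Phi'(\beta)=E(f_\beta)=\cE(\beta)$, so $\cE$ is non-increasing. Strictness follows from uniqueness of the minimizer: if $\cE(\beta_1)=\cE(\beta_2)$ with $\beta_1<\beta_2$, concavity forces $\Phi$ affine on $[\beta_1,\beta_2]$, hence $H(f_{\beta_1})=H(f_{\beta_2})$ and $E(f_{\beta_1})=E(f_{\beta_2})$; then $f_{\beta_2}$ also minimizes $H+\beta_1E$, so $f_{\beta_1}=f_{\beta_2}$ and the kinetic energies $\frac{m_0d}{2\beta_i}$ coincide, forcing $\beta_1=\beta_2$.

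\emph{Limits and the main obstacle.} It remains to compute the two endpoint limits. The bound $\cE(\beta)\ge\frac{m_0d}{2\beta}$ gives $\lim_{\beta\to0^+}\cE(\beta)=+\infty$ immediately. The delicate limit is $\lim_{\beta\to\infty}\cE(\beta)=0$, which amounts to showing the field energy $\frac12\|\nabla\phi_\beta\|_{L^2}^2\to0$, and I expect this to be the main obstacle. As $\beta\to\infty$ the functionals $J_\beta$ $\Gamma$-converge to the obstacle-type functional $\frac12\|\nabla\phi\|^2-\int n_I\phi$ restricted to $\{\phi\le0\}$, whose minimizer is $\phi\equiv0$ \emph{precisely because} $n_I\ge0$ (making $\phi$ negative can only raise the functional). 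Promoting this to $H^1$-convergence $\phi_\beta\to0$ requires uniform control of $\phi_\beta$: the one-sided bound $\bar\phi_\beta\le\frac1\beta\log(m_0/|\Omega|)$ from Jensen's inequality, together with the measure estimate $|\{\phi_\beta>\delta\}|\le m_0e^{-\beta\delta}$ coming from $\int e^{\beta\phi_\beta}=m_0$, are the key ingredients, and this is where the $L^2$-only regularity of $n_I$ must be handled with care. Granting these limits, $\cE$ is a continuous strictly decreasing bijection of $(0,\infty)$ onto $(0,\infty)$, so for each $\cE_1>0$ there is exactly one $\beta$ with $\cE(\beta)=\cE_1$, with associated potential $\phi=\phi_\beta$, completing the proof.
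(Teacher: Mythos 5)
Your overall architecture --- an inner elliptic solve at fixed $\beta$ followed by an outer scalar equation $\cE(\beta)=\cE_1$ settled by strict monotonicity plus the two endpoint limits --- is exactly the paper's, but your monotonicity argument is genuinely different. The paper differentiates $\cE$ directly: it introduces the linearized Neumann problem $-\Delta\,\del_\beta\phi^\beta+\beta e^{\beta\phi^\beta}\del_\beta\phi^\beta=-e^{\beta\phi^\beta}\phi^\beta$, substitutes $\phi^\beta=e^{-\beta\phi^\beta}\Delta\del_\beta\phi^\beta-\beta\del_\beta\phi^\beta$ into $\del_\beta\cE$, and lands on the explicit sign
\begin{equation*}
\del_\beta\cE(\beta)=-\frac{m_0d}{2\beta^2}-\int_\Omega e^{-\beta\phi^\beta}|\Delta\del_\beta\phi^\beta|^2\,dx-\beta\int_\Omega|\nabla\del_\beta\phi^\beta|^2\,dx<0.
\end{equation*}
Your route through the concave free energy $\Phi(\beta)=\inf\{H(f)+\beta E(f)\}$ and the envelope theorem is correct as far as it goes ($E$ is convex because $\nabla\phi_f$ is affine in $f$; the minimizer is the Maxwell--Boltzmann state; and your affineness argument does upgrade non-increase to strict decrease), and it has the virtue of tying the monotonicity to the thermodynamic structure of Section 1. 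But it imports heavier machinery (existence and identification of the kinetic minimizer, Danskin's theorem, and in $d\ge 3$ the usual care with $\int e^{\beta\phi}$ on $H^1$) than the paper's two-line computation, which moreover gives a quantitative lower bound on $-\del_\beta\cE$.

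The one genuine gap is the limit $\cE(\beta)\to0$ as $\beta\to\infty$, which you correctly flag as the crux but do not close. The $\Gamma$-convergence picture identifies the limit but does not by itself produce a rate, and neither the Jensen bound on the mean of $\phi_\beta$ nor the measure estimate $|\{\phi_\beta>\delta\}|\le m_0e^{-\beta\delta}$ controls $\int_\Omega n_I\phi_\beta\,dx$ when $n_I$ is only in $L^2$: bounding an integral of $n_I$ over a small set requires integrability of $\phi_\beta$ there, not just smallness of the set. The paper closes this in two lines by multiplying the equation by $\phi^\beta$, giving $\int_\Omega|\nabla\phi^\beta|^2\,dx=\int_\Omega(n_I-e^{\beta\phi^\beta})\phi^\beta\,dx$, and splitting: on $\{\phi^\beta\ge0\}$ the inequality $e^{\beta\phi}\phi\ge\beta\phi^2$ and Young give the bound $\tfrac{1}{2\beta}\|n_I\|_{L^2}^2$, while on $\{\phi^\beta\le0\}$ one has $n_I\phi^\beta\le0$ and $-\beta\phi^\beta e^{\beta\phi^\beta}\le e^{-1}$, giving $|\Omega|e^{-1}/\beta$; hence $\|\nabla\phi^\beta\|_{L^2}^2=O(1/\beta)$. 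Your own variational framework closes just as cleanly if you replace the measure estimate by the pointwise inequality $e^{\beta t}\ge\tfrac12\beta^2t^2$ for $t\ge0$, which converts $\int_\Omega e^{\beta\phi_\beta}\,dx=m_0$ into $\|(\phi_\beta)_+\|_{L^2}\le\sqrt{2m_0}/\beta$ and, combined with $J_\beta(\phi_\beta)\le J_\beta(0)=|\Omega|/\beta$, yields $\tfrac12\|\nabla\phi_\beta\|_{L^2}^2\le\int_\Omega n_I(\phi_\beta)_+\,dx+|\Omega|/\beta\le C/\beta$. With that repair your argument is complete.
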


\begin{proof} For each fixed $\beta>0$, the mapping $\phi \mapsto e^{\beta \phi}$ is strictly increasing and hence by the standard elliptic theory, the problem \eqref{uni-MB} has a unique solution $\phi^\beta \in H^2(\Omega)$. Next, to study the $\beta$-dependence, we consider the following linear problem for $\del_\beta \phi^\beta$:
\begin{equation}\label{delb}-\Delta \del_\beta \phi^\beta +\beta e^{\beta\phi^\beta} \del_\beta \phi^\beta =-e^{\beta\phi^\beta} \phi^\beta \,,\qquad \frac{\partial \del_\beta \phi^\beta}{\partial n}_{\vert_{\partial\Omega}} =0
\end{equation}
whose solution exists and is unique, with $\partial_\beta \phi^\beta \in H^2(\Omega)$. The uniqueness proves that $\del_\beta \phi^\beta$ is indeed the derivative of $\phi^\beta$ with respect to $\beta$.  

Next, to determine $\beta$, we use the energy constraint. Taking the $\beta$-derivative of the energy, we have 
\begin{equation}
\del_\beta  \cE(\beta) = -\frac{ m_0 d} {2 \beta^2}+ \int_\Omega \nabla_x \phi^\beta\cdot \nabla_x  \del_\beta \phi^\beta dx= -\frac{ m_0 d} {2 \beta^2} -\int_\Omega \phi^\beta\Delta_x  \del_\beta \phi^\beta dx\,. \label{delen}
\end{equation}
To compute the last term, from \eqref{delb}, we write 
$$
  \phi^\beta =   e^{-\beta\phi^\beta}( \Delta_x \del_\beta \phi^\beta) - \beta \del_\beta \phi_\beta
$$
which yields at once
$$
\begin{aligned}
&\del_\beta  \cE(\beta) =  -\frac{ m_0 d} {2 \beta^2} -  \int_\Omega e^{-\beta \phi^\beta}|\Delta_x  \del_\beta \phi^\beta|^2 dx  - \beta \int_\Omega |\nabla_x\del_\beta\phi^\beta |^2dx .
\end{aligned}
$$
This proves that $\beta \mapsto \cE(\beta )$ is a strictly decreasing function. Clearly, $\lim_{\beta\rightarrow 0}\cE(\beta)  =\infty$, which follows from the term $\frac{ m_0 d} { 2\beta}$. On the other hand, from the elliptic equation for $\phi^\beta$, we obtain 
$$
\begin{aligned}
\int_\Omega  |\nabla \phi^\beta|^2 dx &=  \int_\Omega \Big( n_I (x) \phi^\beta(x) -  e^{\beta \phi^\beta}\phi^\beta \Big)dx 
\\
&\le  \int_{\{\phi^\beta \ge 0\}} (n_I (x) \phi^\beta(x)-    e^{\beta \phi^\beta}\phi^\beta )dx  - \frac 1{\beta} \int_{\{\phi^\beta\le 0\}} e^{\beta \phi^\beta}Ê\beta \phi^\beta dx.
 \end{aligned}
$$
Using the fact that $e^x \ge x$ for $x\ge 0$ and $-x e^x \le e^{-1}$ for $x\le 0$, we obtain  
$$ \int_{\{\phi^\beta \ge 0\}} (n_I (x) \phi^\beta(x)-    e^{\beta \phi^\beta}\phi^\beta )dx \le \| n_I\|_{L^2} \| \phi^\beta\|_{L^2}  - \beta \| \phi^\beta \|_{L^2}^2 \le \frac{1}{2\beta} \| n_I\|_{L^2}^2$$
and 
$$\frac 1{\beta} \int_{\{\phi^\beta\le 0\}} e^{\beta \phi^\beta}Ê\beta \phi^\beta dx \le \frac{|\Omega|e^{-1}}{\beta}.$$
This proves that $\cE(\beta) \to 0$ as $\beta \to \infty$. The existence and uniqueness of $\beta$ so that $\cE(\beta) = \cE_1 $ follows from the strict monotonicity of $\cE(\beta)$ in $\beta \in (0,\infty)$. The theorem is proved. 
\end{proof}

\subsection{Arnold's nonlinear stability for fixed ions density}
In this sub-section, we consider the Vlasov-Poisson system for electrons. That is to say $f_-(x,v,t)=f(x,v,t)$ and $\phi$ solve 
\begin{equation}\label{VPB1}  \epsilon \partial_t f + v \cdot \nabla_x f  + \nabla_x \phi \cdot \nabla_v f= \eta_\epsilon Q(f)\end{equation} 
together with the specular boundary condition for $f$, 
coupled with Poisson equation
\begin{equation}\label{Poisson1}
 -\Delta \phi + \la f(x)\ra = n_I (x), \qquad \frac{\partial \phi}{\partial n}_{\vert_{\partial \Omega}} = 0\end{equation}
for fixed ions density $n_I (x)$.
It is worthwhile to study the stability of the steady solution $(F,\Phi)$ given by
\begin{equation}
\label{def-sM}
 F(x,v) = \Big(\frac{\beta }{2\pi }\Big)^{3/2}  e^{- \beta \Big(\frac{ |v |^2}{2} - \Phi (x)\Big )}
\end{equation} 
  and the solution to Poisson equation 
\begin{equation}\label{Poisson2}
  -\Delta \Phi + \la F(x)\ra = n_I (x), \qquad \frac{\partial \Phi}{\partial n}_{\vert_{\partial \Omega}} =0.
\end{equation}

  We study the entropic stability of the stationary solution in the sense of Arnold in his stability theory for two-dimensional Euler flows. We introduce the notion of relative entropy:
$$ \mathcal{H} (f| F): = \iint_{\Omega \times \mathbb{R}^3} \Big[ f \log \Big(\frac{f}{F}\Big) - f + F\Big](x,v)\; dxdv,$$
for measurable functions $f\ge 0$ and $F>0$. One observes that $\mathcal{H}(f|F) = 0$ if and only if $f = F$ almost everywhere.   

\begin{theorem}\label{Arnold Type}\label{theo-Arnold2}  Let $(F, \Phi)$ be any stationary solution given by \eqref{def-sM} and \eqref{Poisson2}, and let $(f,\phi)$ be any smooth solution of the Vlasov-Poisson-Boltzmann system \eqref{VPB1}-\eqref{Poisson1} so that $f$ is rapidly decaying and $\log f$ has polynomial growth in $v$ as $|v|\to \infty$. Then, there holds 
\begin{equation}\label{entropy-ineq}
\begin{aligned}
 \epsilon \frac{d}{dt}\mathcal{H} (f | F)   +    \frac{\beta}{2}\frac{d}{dt} &\int_\Omega |\nabla \phi - \nabla  \Phi|^2  \; dx   = D(f)
   \end{aligned}\end{equation} 
   in which $D(f)$ denotes the entropy dissipation, defined by 
$$    D(f):=  \eta_\epsilon \iint_{\Omega \times \mathbb{R}^3} Q(f) \log f \; dxdv
\le 0.  $$ 
\end{theorem}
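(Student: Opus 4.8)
The plan is to differentiate the relative entropy $\mathcal{H}(f|F)$ along the flow and feed in the kinetic equation \eqref{VPB1}. Since $F$ is stationary, only $f$ depends on $t$, and the pointwise identity $\partial_t\big[f\log(f/F)-f+F\big]=(\partial_t f)\log(f/F)$ gives $\epsilon\frac{d}{dt}\mathcal{H}(f|F)=\iint_{\Omega\times\bR^3}(\epsilon\partial_t f)\log(f/F)\,dxdv$. I would then substitute $\epsilon\partial_t f=-v\cdot\nabla_x f-\nabla_x\phi\cdot\nabla_v f+\eta_\epsilon Q(f)$, splitting the right-hand side into a transport part and a collision part, and write $\log(f/F)=\log f-\log F$ throughout.

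For the collision part, the piece against $\log f$ is by definition $D(f)=\eta_\epsilon\iint Q(f)\log f\,dxdv$, which is $\le 0$ by the H-theorem \eqref{htheorem}. The piece against $\log F$ vanishes: because $F$ is the Maxwellian \eqref{def-sM}, $\log F=\tfrac d2\log(\beta/2\pi)-\beta\tfrac{|v|^2}{2}+\beta\Phi(x)$ is an affine combination of $1$, $|v|^2$ and the $v$-independent weight $\Phi(x)$, so the conservation identities \eqref{Q-property} (namely $\la Q(f)\ra=0$ and $\la|v|^2Q(f)\ra=0$) force $\eta_\epsilon\iint Q(f)\log F\,dxdv=0$. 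Hence the collision part contributes exactly $D(f)$.

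For the transport part, the piece against $\log f$ is the entropy flux of free transport: using $v\cdot\nabla_x f\,\log f=\nabla_x\cdot\big(v(f\log f-f)\big)$ and $\nabla_x\phi\cdot\nabla_v f\,\log f=\nabla_v\cdot\big(\nabla_x\phi\,(f\log f-f)\big)$, integrating by parts in $x$ and $v$ and using rapid decay in $v$, it collapses to the boundary flux $\int_{\partial\Omega}\la(v\cdot n)(f\log f-f)\ra\,dS$, which I claim vanishes by specular reflection. For the piece against $\log F$, I use $\nabla_x\log F=\beta\nabla\Phi$ and $\nabla_v\log F=-\beta v$; integrating by parts in $x$ and $v$ (the boundary flux again vanishing by specular reflection, since $f$ and $|v|^2$ are reflection-invariant while $v\cdot n$ flips sign) leaves the single cross term $\beta\int_\Omega\la vf\ra\cdot\nabla_x(\phi-\Phi)\,dx$. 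Collecting, $\epsilon\frac{d}{dt}\mathcal{H}(f|F)=\beta\int_\Omega\la vf\ra\cdot\nabla_x(\phi-\Phi)\,dx+D(f)$.

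It remains to identify the cross term with the electrostatic-energy term. Differentiating and using $\partial_t\Phi=0$ together with the Neumann conditions, $\frac{\beta}{2}\frac{d}{dt}\int_\Omega|\nabla\phi-\nabla\Phi|^2\,dx=-\beta\int_\Omega(\phi-\Phi)\,\Delta\partial_t\phi\,dx$. From the Poisson equation \eqref{Poisson1} and the continuity equation obtained by integrating \eqref{VPB1} in $v$ (cf. \eqref{massev}), $\Delta\partial_t\phi=\partial_t\la f\ra=-\epsilon^{-1}\nabla_x\cdot\la vf\ra$, and one further integration by parts (whose boundary term vanishes because $\la(v\cdot n)f\ra=0$ on $\partial\Omega$, again by specular reflection) gives $\frac{\beta}{2}\frac{d}{dt}\int_\Omega|\nabla\phi-\nabla\Phi|^2\,dx=-\frac{\beta}{\epsilon}\int_\Omega\nabla_x(\phi-\Phi)\cdot\la vf\ra\,dx$. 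Adding $\epsilon$ times this identity to the entropy identity cancels the cross term and produces the asserted balance \eqref{entropy-ineq}, with $D(f)\le 0$. The main obstacle is the repeated assertion that every boundary flux vanishes under specular reflection: the map $v\mapsto v-2(v\cdot n)n$ is a measure-preserving involution that reverses the sign of $v\cdot n$ while leaving $f$ and $|v|^2$ invariant, so each flux integrand is odd under this involution and integrates to zero; a secondary difficulty is the careful tracking of the $\epsilon$-weights and signs when combining the entropy and field-energy identities.
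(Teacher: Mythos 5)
Your argument follows the same route as the paper's: differentiate $\mathcal H(f|F)$, substitute the kinetic equation, kill the collision term against $\log F$ by the collision invariants \eqref{Q-property}, identify the piece against $\log f$ with $D(f)\le 0$ via \eqref{htheorem}, reduce the transport part to the cross term $\beta\int_\Omega \la vf\ra\cdot\nabla(\phi-\Phi)\,dx$ after integrations by parts whose boundary fluxes vanish by specular reflection, and finally convert that cross term into the time derivative of the electrostatic energy via local mass conservation and the Poisson equation. All of these steps are sound, and your justification of the vanishing boundary fluxes (oddness of $v\cdot n$ under the specular involution) is exactly what the paper leaves implicit.

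There is, however, one point where your write-up does not close: the $\epsilon$ bookkeeping at the last step. You correctly use the continuity equation in the form $\epsilon\,\partial_t\la f\ra+\nabla_x\cdot\la vf\ra=0$ (consistent with \eqref{VPB1} and \eqref{massev}), which gives
\begin{equation*}
\beta\int_\Omega \la vf\ra\cdot\nabla(\phi-\Phi)\,dx
=-\,\epsilon\,\frac{\beta}{2}\frac{d}{dt}\int_\Omega|\nabla\phi-\nabla\Phi|^2\,dx ,
\end{equation*}
so that your computation actually yields
\begin{equation*}
\epsilon\frac{d}{dt}\mathcal H(f|F)+\epsilon\,\frac{\beta}{2}\frac{d}{dt}\int_\Omega|\nabla\phi-\nabla\Phi|^2\,dx=D(f),
\end{equation*}
i.e.\ an $\epsilon$ in front of \emph{both} time derivatives. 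This is not literally \eqref{entropy-ineq}, which carries $\epsilon$ only on the entropy term; the paper's own proof reaches the stated form only because it applies mass conservation as $\partial_t\la f\ra+\nabla_x\cdot\la vf\ra=0$, without the $\epsilon^{-1}$ of \eqref{massev}. So either the theorem statement or the paper's intermediate step has a misplaced $\epsilon$; your derivation is the internally consistent one, but you should not assert that it ``produces the asserted balance'' verbatim. The discrepancy is harmless for the qualitative conclusion (dividing by $\epsilon>0$ still exhibits $\mathcal H(f|F)+\tfrac{\beta}{2}\|\nabla\phi-\nabla\Phi\|_{L^2}^2$ as a Lyapunov functional), but it should be stated explicitly rather than papered over.
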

   
\begin{proof} Multiplying the Vlasov equation by $\log f$, integrating over $\Omega \times \mathbb{R}^3$, and using the specular boundary condition on $f$, we get 
$$  \epsilon \frac{d}{dt}\mathcal{H} (f)  = D(f) . $$
Hence, by definition,
$$
\begin{aligned}
 \epsilon \frac{d}{dt}\mathcal{H} (f | F)  - D(f) &=  - \iint_{\Omega \times \mathbb{R}^3}  (1+\log F) \partial_t f (x,v,t)\; dxdv
 \\ & =  \iint_{\Omega \times \mathbb{R}^3}  (1+\log F)  \Big[ v \cdot \nabla_x f  + \nabla_x \phi \cdot \nabla_v f - \eta_\epsilon Q(f) \Big]\; dxdv
  \\ & =  \iint_{\Omega \times \mathbb{R}^3}  \Big(\mu  - \beta(\frac{|v|^2}{2} - \Phi)\Big)  \Big[ v \cdot \nabla_x f + \nabla_x \phi \cdot \nabla_v f - \eta_\epsilon Q(f) \Big]\; dxdv,
   \end{aligned}$$
with $\mu = 1+ \frac 32 \log(\frac{\beta}{2\pi})$, in which we have used the explicit form of $F$ as in \eqref{def-sM}. Using the property of $Q(f,f)$, stated in \eqref{Q-property}, the above integral involving $Q(f)$ vanishes.   
Integrating by parts with respect to $x$ and $v$ and using the specular boundary condition on $f$, we get 
\begin{equation}\label{Hff}
\begin{aligned}
 \epsilon \frac{d}{dt}\mathcal{H} (f | F)  -D(f)
 &=  \beta \iint_{\Omega \times \mathbb{R}^3} (\nabla_x \phi - \nabla_x  \Phi) \cdot v f  \; dxdv
 \\
 & = - \beta \int_{\Omega} (\phi -\Phi) \nabla_x \cdot \la v f\ra  \; dx
   =  \beta \int_{\Omega} (\phi -  \Phi) \partial_t \la f \ra  \; dx
   \\
 & =  \beta \int_{\Omega} (\phi -  \Phi) \partial_t \Delta \phi \; dx
    = - \frac \beta 2\frac{d}{dt}\int_{\Omega} |\nabla \phi - \nabla  \Phi|^2\; dx
   \end{aligned}\end{equation}
in which the local conservation of mass was used. This proves the theorem. 
\end{proof}

\begin{remark}\label{rem-weak}{\em The above theorem holds for weak limit of smooth solutions. Precisely, fix $\epsilon>0$, and let $(f_-^n,  \phi^n)$ be any sequence of smooth solutions to the  system, with given initial data $(f_-^0, \phi^0)$ independent of $n$, satisfying
$$ \iint_{\Omega \times \RR^3}  f_-^n \frac{v^2}{2} \; dxdv + \int_\Omega |\nabla \phi^n|^2 \; dx  \le C_0$$
$$ \iint_{\Omega\times \RR^3} f_-^n \log f_-^n \; dxdv \le C_0,\qquad \sup_{(x,v) \in \Omega \times \RR^3} |f_-^n(x,v,t)| \le C_0,$$
for some constant $C_0$, for almost everywhere $t \ge 0$. We assume that $(f_-^n, \phi^n)$ converges weakly to some functions $(f_-, \phi)$ in the following sense: $\nabla \phi^n \rightharpoonup \nabla\phi$ weakly in $L^\infty(\RR_+; L^2 (\Omega))$ and $f_-^n  \rightharpoonup f_-$ weakly in $L^1_{loc}  (\RR_+ \times \Omega \times \RR^3)$. Then, Theorem \ref{theo-Arnold2} holds for $(f_-, \phi)$. 
} {The stability of the ions problem, analyzed below, implies that  $n_I(x,t)$ is slowly varying, and together with the result of Theorem \ref{theo-Arnold2},  this justifies that in many applications, $\beta$ may be taken independent of $t\,.$}
\end{remark}

\section{The reduced ions problem}
As observed above, the Maxwell-Boltzmann approximation reduces the electrons ions problem to a simpler one involving only the ions dynamics. Precisely, 
%
\begin{equation}\label{ions}
\begin{aligned}
&\partial_t f_+ + v \cdot \nabla_x f_+  - \nabla_x \phi \cdot \nabla_v f_+ = 0,\\
& -\Delta \phi + e^{\beta(t)  \phi }=  n_I(x,t)
\\
&
\int_\Omega e^{\beta(t) \phi} \; dx = \int_\Omega n_I(x,t) \; dx = m_0,
\end{aligned}
 \end{equation} 
 for a given positive $m_0$,  in which $\beta(t)$ is determined through the conservation of energy
 \begin{equation}
 \label{ions-beta} \frac{m_0 d}{2\beta(t)} + \frac{1}{2} \int_\Omega |\nabla \phi|^2 \; dx + \iint_{\Omega \times \mathbb{R}^d} \frac{|v|^2}{2} f_+(x,v,t)\; dvdx = \mathcal{E}_0
 \end{equation}
 for some fixed $\mathcal{E}_0> 0$. This is a weakly nonlinear modification of the Vlasov-Poisson system. The classical results there can be adapted to the above reduced ions problem. 
 Here, $\Omega$ is either a bounded open domain or periodic box in $\mathbb{R}^d$. In the former case, we use the specular boundary condition for $f_+$ and the zero Neumann boundary condition for $\phi$. 
 
 Our result in this section is as follows.

 \begin{theorem}[Existence of weak solutions] \label{theo-existence} Assume that the initial data $f_{0,+}\in L^1 \cap L^\infty$ are  compactly supported in $v$ and 
 {that for some fixed $\mathcal E_0$ one has:
 \begin{equation}
 \int_{\Omega\times \mathbb{R}^d} \frac{|v|^2}2f_{0,+}(x,v)dxdv \le a\mathcal E_0 \quad \hbox{with} \, \,\,a<1\,.
 \label{compatibility}
 \end{equation} }
 Then, there is a time $T>0$ so that weak solutions $(f_+, \phi, \beta)$ to the ions problem exist and satisfy 
 $$ f_+ \in L^\infty (0,T; L^1 \cap L^\infty (\Omega \times \mathbb{R}^d)), \qquad n_I \in L^\infty (0,T; L^1 \cap L^\infty (\Omega)),$$
 the electric field $E = -\nabla\phi \in L^\infty(0,T; L^\infty(\Omega))$, and $\beta \in  L^\infty ([0,T])$. Moreover, for $d = 1,2,3$, this solution can be extended globally in time. 
 \end{theorem}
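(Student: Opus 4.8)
The plan is to construct weak solutions to the reduced ions problem~\eqref{ions}--\eqref{ions-beta} by a fixed-point argument built on the classical theory of the Vlasov-Poisson system, treating the modifications (the exponential term $e^{\beta\phi}$ in the Poisson equation and the time-dependent parameter $\beta(t)$ determined implicitly by the energy constraint) as controllable perturbations. First I would freeze the nonlinearity: given a candidate density $n_I(x,t) \in L^\infty(0,T; L^1\cap L^\infty)$ with mass $m_0$, and a candidate ion kinetic energy profile, I would invoke Theorem~\ref{theo-beta} to produce, at each time $t$, a unique pair $(\beta(t), \phi(\cdot,t))$ solving the Poisson-Poincar\'e equation $-\Delta\phi + e^{\beta\phi} = n_I$ together with the mass and energy constraints. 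The compatibility condition~\eqref{compatibility} with $a<1$ guarantees that the residual energy $\mathcal{E}_1 = \mathcal{E}_0 - \iint \tfrac{|v|^2}{2} f_+\,dxdv$ available to the electron term stays strictly positive on a short time interval, so that Theorem~\ref{theo-beta} applies and yields $\beta(t)$ bounded away from $0$ and $\infty$.

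The key regularity input is that the elliptic problem~\eqref{uni-MB} gives $\phi \in H^2$, and by bootstrapping (the ion density is bounded in $L^\infty$, so $e^{\beta\phi}-n_I \in L^\infty$ and elliptic regularity pushes $\phi$ into $W^{2,p}$ for all $p$, hence $\nabla\phi \in L^\infty$ by Sobolev embedding in $d\le 3$), one obtains a Lipschitz-in-$x$ electric field $E=-\nabla\phi$. With $E\in L^\infty(0,T;W^{1,\infty}(\Omega))$ the characteristics of the transport equation for $f_+$ are well-defined, so I would solve the Vlasov equation by the method of characteristics, pushing forward the initial datum $f_{0,+}$ along the flow; this preserves the $L^1\cap L^\infty$ bounds and the compact support in $v$ on the time of existence. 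The compact $v$-support is propagated because $|\nabla\phi|$ is uniformly bounded, so velocities grow at most linearly, which also keeps the ion kinetic energy finite and controls the support radius. Feeding the new $n_I = \la f_+\ra$ back into the elliptic step defines the fixed-point map, and I would close it on a short interval $[0,T]$ by a Schauder or Banach argument, using stability estimates for the Poisson-Poincar\'e solution with respect to $n_I$ (the map $n_I \mapsto (\beta,\phi)$ is continuous because of the strict monotonicity established in Theorems~\ref{theo-beta} and in the proof via~\eqref{delen}).

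The main obstacle is the coupling through $\beta(t)$: unlike the standard Vlasov-Poisson system, here $\phi$ does not depend on $n_I$ alone but also on the global energy budget via $\beta$, and $\beta$ in turn is defined only implicitly through the energy identity~\eqref{ions-beta}, which itself contains $\iint \tfrac{|v|^2}{2} f_+$ — a functional of the very solution one is constructing. I expect the delicate point to be showing that the map $t\mapsto \beta(t)$ inherits enough continuity (indeed $L^\infty$ and ideally continuity) from the ion dynamics so that the characteristics remain well-posed; this requires quantitative control of how $\beta$ varies when the kinetic energy of the ions varies, which is exactly the strict monotonicity and the derivative formula~\eqref{delen} for $\cE(\beta)$. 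The condition $a<1$ in~\eqref{compatibility} is what keeps the residual electron energy positive and $\beta$ finite, preventing the electron temperature from collapsing.

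Finally, for the global extension in $d=1,2,3$ I would argue that the only way the short-time solution can fail to continue is through blow-up of the velocity support or of $\|E\|_{L^\infty}$. The conservation of energy~\eqref{ions-beta} gives an a priori bound on $\iint \tfrac{|v|^2}{2} f_+$ and on $\|\nabla\phi\|_{L^2}$, hence on the mass and energy of $n_I$, uniformly in time; combined with the $L^\infty$ bound on $f_+$ (transported, hence conserved) this yields, through the standard Vlasov-Poisson moment estimates adapted to the extra $e^{\beta\phi}$ term, a bound on $\|E\|_{L^\infty}$ that grows at most at a controlled rate. In low dimensions these a priori bounds are strong enough — as in the classical global existence results of Arsenev/DiPerna-Lions/Pfaffelmoser-type theory for Vlasov-Poisson — to rule out finite-time blow-up and iterate the local construction to all of $[0,\infty)$. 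I would emphasize that the extra term $e^{\beta\phi}$ only improves the elliptic estimates (it has a favorable sign), so it does not introduce new obstructions to globalization beyond those already present in the classical theory, and the global bound on $\beta$ follows from the uniform energy bound through Theorem~\ref{theo-beta}.
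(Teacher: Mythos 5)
Your overall architecture coincides with the paper's: freeze the ion data, solve the elliptic step with Theorem \ref{theo-beta}, use the compatibility condition $a<1$ to keep the residual energy $\mathcal{E}_0-\iint\frac{|v|^2}{2}f_+\,dv\,dx$ strictly positive on a short interval uniformly along the iteration (this is exactly Lemma \ref{positif} in the paper), derive the a priori bounds $\|E\|_{L^\infty}\lesssim\|n_I\|_{L^1}^{1/d}\|n_I\|_{L^\infty}^{(d-1)/d}$ and $\|n_I\|_{L^\infty}\le C_0+C_0(\int_0^t\|E\|_{L^\infty})^d$ via the Green kernel and the characteristics, and globalize for $d=3$ by Schaeffer's argument after splitting off the $e^{\beta\phi}$ contribution to $E$, which is harmless because $e^{\beta\phi}$ is uniformly bounded. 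All of that is sound and matches the paper.

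The genuine gap is in how you propose to close the fixed point. A Banach contraction would require estimating $f_1-f_2$ in terms of $\phi_1-\phi_2$, which forces a bound on $\nabla_v f$; that is precisely the \emph{extra} regularity hypothesis \eqref{assmp-U} of the uniqueness theorem (Theorem \ref{theo-uniqueness}) and is not available for merely $L^1\cap L^\infty$ data, so this route does not produce weak solutions. A Schauder argument requires compactness of the iteration map, and continuity of $n_I\mapsto(\beta,\phi)$ (which you correctly deduce from the monotonicity of $\cE(\beta)$) is not enough: the difficulty is that $(\beta_n,\phi_n)$ depends \emph{nonlinearly and implicitly} on the pair $(\rho_n(t),\iint\frac{|v|^2}{2}f_n\,dv\,dx)$, so weak-$*$ convergence of $f_n$ does not let you pass to the limit in the elliptic constraint, nor in the product $\nabla_x\phi_n\cdot\nabla_v f_{n+1}$ in the Vlasov equation. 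The paper supplies the missing compactness with the $L^2$ velocity averaging lemma: the averages $\int f_n\varphi(v)\,dv$ are bounded in $H^{1/4}((0,T)\times\Omega)$, hence $\rho_n$ and the kinetic-energy density converge strongly (a.e.\ in $t$), and then uniqueness in Theorem \ref{theo-beta} upgrades this to a.e.-in-$t$ convergence of $(\beta_n,\phi_n)$. Your proposal identifies the $\beta(t)$ coupling as the delicate point but does not provide this (or any) compactness mechanism in time, so as written the limit passage does not close. A minor secondary point: with $n_I\in L^\infty$ only, elliptic regularity gives $\phi\in W^{2,p}$ for all finite $p$, so $E$ is log-Lipschitz (Hölder of every exponent $<1$), not Lipschitz; this still suffices for the characteristics but should not be overstated.
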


\begin{remark}{\em In the above theorem, the compact support hypothesis on the initial data is assumed for sake of simplicity. One can allow initial data with more general uniform decay, as done in \cite{Lions,Pallard}. 
}\end{remark}

Next, with additional regularity, we have the following uniqueness theorem.

 \begin{theorem}[Uniqueness] \label{theo-uniqueness} Let $T>0$. There exists at most one weak solution $(f_+, \phi, \beta)$
to the reduced ions problem with $v$-compactly supported initial data $f_{0,+}$, provided that
\begin{equation}\label{assmp-U}  \sup_{t\in [0,T]} \sup_{x\in \Omega} \| \nabla_v f_+ \|_{L^2(\mathbb{R}^d)}  +  \int_0^T \| \nabla \phi(s,\cdot)\|_{L^\infty (\Omega)} \; ds  < \infty.\end{equation}
 \end{theorem}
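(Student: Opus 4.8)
The plan is to prove uniqueness by a Grönwall estimate on the difference $g:=f_+^1-f_+^2$ of two weak solutions $(f_+^1,\phi^1,\beta^1)$ and $(f_+^2,\phi^2,\beta^2)$ issued from the same datum $f_{0,+}$, both satisfying \eqref{assmp-U}. As a preliminary step I would record three uniform bounds that the hypotheses supply. First, since $E=-\nabla\phi\in L^\infty$ and $\int_0^T\|\nabla\phi\|_{L^\infty}\,ds<\infty$, the characteristics of \eqref{ions} keep each $f_+^i(t)$ supported in a ball $\{|v|\le R(t)\}$ with $R$ bounded on $[0,T]$. Second, the energy identity \eqref{ions-beta} forces $\beta^i(t)\ge m_0d/(2\mathcal E_0)>0$, while $\beta^i\in L^\infty$ gives an upper bound; together with the $L^\infty$ bound on $\phi^i$ coming from $\nabla\phi^i\in L^\infty$ on the bounded Neumann domain, this makes $e^{\beta^i\phi^i}$ bounded above and below by positive constants. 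These facts are used throughout.

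First I would derive the transport equation for $g$. Subtracting the two Vlasov equations gives
$$\partial_t g+v\cdot\nabla_x g-\nabla_x\phi^1\cdot\nabla_v g=(\nabla_x\phi^1-\nabla_x\phi^2)\cdot\nabla_v f_+^2 .$$
Multiplying by $g$ and integrating over $\Omega\times\mathbb{R}^d$, the transport terms drop out: the $x$-flux vanishes by the specular boundary condition (the incoming and outgoing contributions cancel), and the $v$-flux vanishes because $\nabla_x\phi^1$ is $v$-independent. Bounding the inner $v$-integral of the source by Cauchy--Schwarz and using the compact $v$-support, I obtain
$$\frac{d}{dt}\|g\|_{L^2}\le \Big(\sup_{t,x}\|\nabla_v f_+^2\|_{L^2(\mathbb{R}^d)}\Big)\,\|\nabla_x\phi^1-\nabla_x\phi^2\|_{L^2(\Omega)} .$$
This is precisely where the hypothesis $\sup_{t,x}\|\nabla_v f_+\|_{L^2}<\infty$ enters, and it is what lets the argument close linearly rather than with a loss of velocity derivative.

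It then remains to dominate the field difference by $\|g\|_{L^2}$. Subtracting the two nonlinear Poisson equations in \eqref{ions} and testing against $\phi^1-\phi^2$, I would split $e^{\beta^1\phi^1}-e^{\beta^2\phi^2}$ into a part proportional to $\phi^1-\phi^2$ with a strictly positive coefficient, which by monotonicity reinforces the $\|\nabla(\phi^1-\phi^2)\|_{L^2}^2$ term and yields full $H^1$-coercivity, and a cross term proportional to $\beta^1-\beta^2$. Using $\|\langle g\rangle\|_{L^2}\lesssim\|g\|_{L^2}$ (compact support) and Young's inequality to absorb the cross term, I get $\|\phi^1-\phi^2\|_{H^1}\lesssim\|g\|_{L^2}+|\beta^1-\beta^2|$. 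For the last piece, both solutions carry the same mass $m_0$ and energy $\mathcal E_0$, so by Theorem \ref{theo-beta} each $\beta^i(t)$ is the unique root of $\cE(\beta;n_I^i)=\mathcal E_0-\iint\tfrac{|v|^2}{2}f_+^i$; since that theorem also provides $\partial_\beta\cE\le-m_0d/(2\beta^2)<0$ uniformly, a one-variable implicit-function argument gives $|\beta^1-\beta^2|\lesssim|\iint\tfrac{|v|^2}{2}g|+\|n_I^1-n_I^2\|_{*}\lesssim\|g\|_{L^2}$, where $\|\cdot\|_{*}$ is a negative Sobolev norm and $n_I^i=\langle f_+^i\rangle$. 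Combining the three estimates yields $\frac{d}{dt}\|g\|_{L^2}\le C\|g\|_{L^2}$ with $g(0)=0$, hence $g\equiv0$, and then $\phi^1=\phi^2$ and $\beta^1=\beta^2$.

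The main obstacle is the $\beta$-stability step: $\beta(t)$ is defined only implicitly through the energy constraint coupled to the nonlinear Poisson equation, so one must rule out a circular dependence in which $|\beta^1-\beta^2|$ is estimated by $\|\nabla\phi^1-\nabla\phi^2\|$ and conversely. This is resolved by encapsulating the field inside the scalar function $\cE(\beta;n_I)$ and invoking the uniform strict monotonicity $\partial_\beta\cE<0$ from Theorem \ref{theo-beta}, which shows that $\beta$ is a genuinely Lipschitz functional of the macroscopic density $n_I=\langle f_+\rangle$ and of the ion kinetic energy, both directly controlled by $\|g\|_{L^2}$ (the $n_I$-sensitivity of the field-energy term being Lipschitz because the underlying elliptic problem is coercive). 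The remaining technical points, namely the cancellation of the boundary terms under specular reflection in the $L^2$ estimate and the propagation of the compact $v$-support, both follow from the regularity assumed in \eqref{assmp-U}.
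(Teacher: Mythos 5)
Your argument is correct, and the skeleton (an $L^2$ Gr\"onwall estimate on $f_+^1-f_+^2$, closed by an elliptic estimate for $\phi^1-\phi^2$ and a separate control of $\beta^1-\beta^2$) matches the paper's. Where you genuinely diverge is in the control of $|\beta^1-\beta^2|$, which is the delicate step. The paper subtracts the two energy identities, multiplies by $-2(\beta^1-\beta^2)$, and pairs the result with a weighted second-order elliptic estimate (testing the difference of the Poisson equations against $-2e^{-\beta_1\phi_1}\Delta\phi$) so that the cross terms $\beta\int(\phi_1+\phi_2)\Delta\phi$ cancel exactly; this leaves quartic remainders $|\beta|^4+\|\phi\|_{L^4}^4$ which are then absorbed using an $L^p$ elliptic bound together with a separate compactness argument showing $\beta_j(t)$ is continuous in time, so that $|\beta|$ is small on a short interval and the estimate closes there. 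You instead invoke the strict uniform monotonicity $\partial_\beta\cE\le -m_0d/(2\beta^2)<0$ already established in the proof of Theorem \ref{theo-beta}, plus Lipschitz dependence of $\cE(\beta;\cdot)$ on $n_I$, to read off $|\beta^1-\beta^2|\lesssim\|g\|_{L^2}$ directly and linearly. Your route buys a cleaner closure: no quartic terms, no need for the time-continuity of $\beta$ or for restricting to small $t$ and continuing, and only an $H^1$ (rather than $H^2$) estimate on $\phi^1-\phi^2$ is needed since the transport estimate only consumes $\|\nabla(\phi^1-\phi^2)\|_{L^2}$. The one point you should make explicit is the Lipschitz claim for $n_I\mapsto\cE(\beta;n_I)$: at fixed $\beta$, test the difference of the two elliptic problems against the difference of the potentials and use the uniform positive lower bound on $\beta e^{\beta\phi}$ (available from the a priori bounds $\beta\in[\beta_{\min},\beta_{\max}]$ and $\phi\in L^\infty$) to get $L^2$-coercivity --- plain Poincar\'e is not available under the Neumann condition --- and then bound the difference of Dirichlet energies by $\|\nabla(\phi^1+\phi^2)\|_{L^2}\|\nabla(\phi^1-\phi^2)\|_{L^2}$ using the uniform $H^1$ bound on the potentials from the proof of Theorem \ref{theo-beta}. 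With that spelled out, your proof is complete and arguably tighter than the one in the paper.
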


As usual, the proof of existence of solutions, Theorem \ref{theo-existence}, relies on   a-priori estimates. We construct solutions $f_+$ so that 
\begin{equation}\label{EE-f}\iint_{\Omega \times \mathbb{R}^d} \frac{|v|^2}{2} f_+(x,v,t)\; dvdx \le  \mathcal{E}_0,\qquad \iint_{\Omega\times \mathbb{R}^d} f_+(x,v,t) \; dvdx = m_0,\end{equation}
for all $t\ge 0$. It is then straightforward to check that 
$$ \sup_{t \ge 0} \| n_I (\cdot, t)\|_{L^{\frac{d+2}{d}}(\Omega)} \le 2^{\frac {d+2}2}|\mathbb{S}^{d-1}|^{\frac{d+2}d} \|f_+\|^{\frac 2{d+2}}_{L^\infty(  \Omega\times \mathbb{R}^d)}
(\iint_{\Omega\times \mathbb{R}^d}  {|v|^2|} f_+(x,v,t) \; dvdx )^{\frac{d} {d+2}}=C_0.$$

 \subsection{A priori bound on $\beta(t)$}
With  \eqref{ions-beta} we observe that  $\beta(t)$ is bounded below from zero: The fact that $\beta(t)$ also bounded from above follows from the next proposition.
\begin{proposition}\label{prop-beta} For $(\beta, \phi, f_+)$ solution of the ions problem \eqref{ions}-\eqref{ions-beta} the conservation of energy \eqref{ions-beta} is equivalent to the following relation:
\begin{equation}\label{betabound}
\begin{aligned}
 &\beta(t)=e^{\frac{1}{m_0d}( C_0-2 \int_\Omega \beta (t) \phi(x,t)  e^{\beta(t) \phi(x,t)}dx)} \\
&\hbox{ with}\quad C_0=  m_0 d \log \beta (0)+  2\int_\Omega \beta (0)\phi (x,0) e^{\beta (0)\phi(x,0)}\; dx  \,.
\end{aligned}
\end{equation}
\end{proposition}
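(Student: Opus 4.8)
The plan is to observe that the asserted relation \eqref{betabound} is nothing but the statement that the quantity
$$ G(t) := m_0 d \log\beta(t) + 2\int_\Omega \beta(t)\,\phi(x,t)\,e^{\beta(t)\phi(x,t)}\,dx $$
is constant in time, equal to its initial value $C_0 = G(0)$. I would therefore prove the sharper identity $\tfrac{d}{dt}G(t) = -2\beta(t)\,\tfrac{d}{dt}\cE(t)$, where $\cE(t)$ is the total energy on the left of \eqref{ions-beta}. Since $\beta(t)>0$ (it is bounded below away from zero by \eqref{ions-beta}), this identity makes ``$\cE(t)\equiv\cE_0$'' equivalent to ``$G(t)\equiv C_0$'', which is exactly the claimed equivalence between energy conservation and \eqref{betabound}.

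The first ingredient is the differentiated mass constraint. Since $\int_\Omega e^{\beta\phi}\,dx = m_0$ for all $t$, writing $\psi := \partial_t(\beta\phi)$ gives
$$ \int_\Omega e^{\beta\phi}\,\psi\,dx = 0, $$
which I will use to discard lower-order contributions. Differentiating $G$ and using $\partial_t(\beta\phi\,e^{\beta\phi}) = e^{\beta\phi}(1+\beta\phi)\psi$, the term $2\int_\Omega e^{\beta\phi}\psi\,dx$ drops out by this constraint and, since $\beta$ is $x$-independent, one is left with
$$ \frac{d}{dt}G(t) = m_0 d\,\frac{\dot\beta}{\beta} + 2\beta\int_\Omega \phi\,e^{\beta\phi}\,\psi\,dx. $$

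Next I would compute $\tfrac{d}{dt}\cE(t)$. Multiplying the Vlasov equation in \eqref{ions} by $\tfrac{|v|^2}{2}$ and integrating by parts in $x$ and $v$ shows that the ions kinetic energy obeys $\tfrac{d}{dt}\iint \tfrac{|v|^2}{2}f_+\,dvdx = -\int_\Omega \nabla\phi\cdot\la v f_+\ra\,dx = -\int_\Omega \phi\,\partial_t n_I\,dx$, where I use the local mass law $\partial_t n_I + \nabla_x\cdot\la v f_+\ra = 0$, the Neumann condition on $\phi$, and the vanishing of the specular boundary flux $\la v f_+\ra\cdot n$ on $\partial\Omega$. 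Combining this with $\tfrac{d}{dt}\bigl(\tfrac12\int_\Omega|\nabla\phi|^2\bigr) = -\int_\Omega \Delta\phi\,\partial_t\phi\,dx$ and substituting $n_I = -\Delta\phi + e^{\beta\phi}$ and $\partial_t n_I = -\Delta\partial_t\phi + e^{\beta\phi}\psi$ from the Poisson equation, all the $\nabla\phi\cdot\nabla\partial_t\phi$ terms cancel and I obtain
$$ \frac{d}{dt}\cE(t) = -\frac{m_0 d}{2\beta^2}\dot\beta - \int_\Omega \phi\,e^{\beta\phi}\,\psi\,dx. $$
The integral $\int_\Omega \phi\,e^{\beta\phi}\psi\,dx$ is common to both displays, so eliminating it in one line yields $\tfrac{d}{dt}G = -2\beta\,\tfrac{d}{dt}\cE$; integrating and evaluating at $t=0$ identifies the constant as $C_0$ and rearranges to \eqref{betabound}.

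The substantive part is bookkeeping rather than conceptual, and the main obstacle is to justify the integrations by parts carefully: verifying that specular reflection annihilates the kinetic energy and momentum fluxes through $\partial\Omega$, that the Neumann condition annihilates the elliptic boundary terms, and that differentiation under the integral sign and the $v$-integrations are legitimate. All of these are granted by the assumed smoothness and rapid $v$-decay of $f_+$ together with the $H^2$ regularity of $\phi$ from the elliptic theory used in Theorem \ref{theo-beta}. The clean collapse of $\tfrac{d}{dt}G$ hinges entirely on the differentiated mass constraint, so I would establish that identity first.
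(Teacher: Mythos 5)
Your proposal is correct and follows essentially the same route as the paper: differentiate the energy constraint, use the Vlasov equation to write the kinetic energy derivative as $-\int_\Omega\phi\,\partial_t n_I\,dx$, cancel it against the field energy derivative via the Poisson equation, and use the differentiated mass constraint to recognize the remaining term $\int_\Omega\phi\,e^{\beta\phi}\,\partial_t(\beta\phi)\,dx$ as $\tfrac1\beta\partial_t\int_\Omega\beta\phi\,e^{\beta\phi}\,dx$ before integrating in time. Packaging this as the identity $\tfrac{d}{dt}G=-2\beta\,\tfrac{d}{dt}\cE$ is a slightly cleaner way to state the claimed equivalence, but the computation is the same.
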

\begin{corollary}\label{trivial} For $(\beta, \phi, f_+)$ solution of the ions problem $\beta(t)$ is uniformly bounded according to the formula:
\begin{equation}
   \frac{m_0 d}{2\mathcal{E}_0} \le \beta(t) \le e^{\frac{1}{m_0d}( C_0 +2 |\Omega|  e^{-1})}.\label{betabound1}
\end{equation}
\end{corollary}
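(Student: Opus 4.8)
The plan is to read off both bounds directly, using Proposition \ref{prop-beta} for the upper bound and the energy identity \eqref{ions-beta} for the lower bound; the only genuine ingredient is an elementary pointwise inequality.

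For the lower bound I would argue straight from the conservation of energy \eqref{ions-beta}. Since the field energy $\frac12\int_\Omega|\nabla\phi|^2\,dx$ and the ions kinetic energy $\iint_{\Omega\times\mathbb{R}^d}\frac{|v|^2}{2}f_+\,dvdx$ are both nonnegative, the identity forces $\frac{m_0 d}{2\beta(t)}\le\mathcal{E}_0$, hence $\beta(t)\ge\frac{m_0 d}{2\mathcal{E}_0}$ for every $t$. This is exactly the already-noted fact that $\beta$ stays bounded away from $0$.

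For the upper bound I would start from the equivalent relation \eqref{betabound} furnished by Proposition \ref{prop-beta}. Because $s\mapsto e^s$ is increasing, it suffices to bound the exponent, i.e. to control $-2\int_\Omega\beta(t)\phi(x,t)e^{\beta(t)\phi(x,t)}\,dx$ from above. Writing $s=\beta(t)\phi(x,t)$, I would invoke the one-variable inequality $-s\,e^{s}\le e^{-1}$, valid for all $s\in\mathbb{R}$: the function $s\mapsto -s\,e^{s}$ has derivative $-e^{s}(1+s)$, so its unique maximum is $e^{-1}$, attained at $s=-1$. This is precisely the inequality already used in the proof of Theorem \ref{theo-beta}. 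Integrating the pointwise bound over $\Omega$ gives $-2\int_\Omega\beta(t)\phi\,e^{\beta(t)\phi}\,dx\le 2|\Omega|e^{-1}$, and substituting into \eqref{betabound} yields $\beta(t)\le e^{\frac{1}{m_0 d}(C_0+2|\Omega|e^{-1})}$, as claimed.

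I do not expect any serious obstacle: once Proposition \ref{prop-beta} is granted, the corollary is immediate from the bound on $-s\,e^{s}$ together with the nonnegativity of the remaining energy terms. The only point worth checking is that the constant $C_0$ in \eqref{betabound} is finite, which holds because the initial potential $\phi(\cdot,0)$ lies in $H^2(\Omega)\hookrightarrow L^\infty(\Omega)$, so the term $\int_\Omega\beta(0)\phi(x,0)e^{\beta(0)\phi(x,0)}\,dx$ defining $C_0$ is well-defined and finite on the bounded domain $\Omega$.
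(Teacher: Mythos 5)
Your proof is correct and follows essentially the same route as the paper: the lower bound from nonnegativity of the remaining terms in \eqref{ions-beta}, and the upper bound by inserting the elementary estimate on $-s\,e^{s}$ into the exponent of \eqref{betabound}. The only cosmetic difference is that you apply $-s\,e^{s}\le e^{-1}$ pointwise on all of $\Omega$, whereas the paper first restricts to the set $\{\beta\phi<0\}$ before using it; both give the same constant $2|\Omega|e^{-1}$.
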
 
\begin{proof}
 The lower bound in the estimate \eqref{betabound1} is a direct consequence of \eqref{ions-beta}, whereas the upper bound follows from \eqref{betabound} with the estimate:
\begin{equation}
- 2\int_{\Omega }  (\beta \phi)  e^{\beta \phi}\; dx \le - 2\int_{\Omega\cap \{ \beta \phi <0\}}  (\beta \phi)  e^{\beta \phi}\; dx + C_0 \le  2e^{-1} |\Omega| + C_0.
\end{equation} 
Given Proposition \ref{prop-beta}, the corollary is proved. \end{proof}

\begin{proof}[Proof of Proposition \ref{prop-beta}] The existence and uniqueness of    $(\beta(t), \phi(t))$ given $f_+(t)$  (in particular for $t=0$)  is proven in Theorem \ref{theo-beta}. To prove \eqref{betabound} we compute 
 $$
\frac{1}{2}\frac{d}{dt} \int_\Omega |\nabla \phi|^2 \; dx 
 =  - \int_\Omega \phi \Delta \phi_t  = \int_\Omega \phi \partial_t n_I - \int_\Omega \phi \partial_t e^{\beta \phi}$$
and 
$$\frac{d}{dt}\iint_{\Omega \times \mathbb{R}^d} \frac{|v|^2}{2} f_+(x,v,t)\; dvdx  =  - \int_\Omega E \cdot n_I u_I  = \int_\Omega \phi \nabla \cdot n_I u_I =  - \int_\Omega \phi \partial_t n_I.$$
This yields 
$$\begin{aligned}
\frac{d}{dt} &\iint_{\Omega \times \mathbb{R}^d} \frac{|v|^2}{2} f_+(x,v,t)\; dvdx  + \frac{1}{2}\frac{d}{dt} \int_\Omega |\nabla \phi|^2 \; dx  
\\&= - \int_\Omega \phi \partial_t e^{\beta \phi} 
= - \frac 1\beta \partial_t \int_\Omega (\beta \phi -1) e^{\beta \phi}\; dx
\\&= - \frac 1\beta \partial_t \int_\Omega \beta \phi  e^{\beta \phi}\; dx
\end{aligned}$$
in which the last equality is due to the conservation of mass. The constraint \eqref{ions-beta} now reads 
\begin{equation}
 - \frac{m_0 d}{2\beta^2} \partial_t \beta - \frac 1\beta \partial_t \int_\Omega \beta \phi  e^{\beta \phi}\; dx =0 \,.\label{prelog}
 \end{equation}
 Or equivalently, 
 \begin{equation}\label{id-beta} m_0 d \log \beta +  \int_\Omega \beta \phi  e^{\beta \phi}\; dx =C_0, \qquad \forall t\ge 0.\end{equation}
and then \eqref{betabound} follows by integration.
\end{proof}


\subsection{Bounds on the electric field}


Let $f_+$ satisfy \eqref{EE-f}. We start with a priori estimates to the following elliptic problem 
\begin{equation}\label{elliptic}
\begin{aligned}
& -\Delta \phi + e^{\beta(t)  \phi }=  n_I(x,t), \qquad \int_\Omega e^{\beta(t) \phi} \; dx =m_0,\\
&\hbox{ with}\quad  \del_n \phi_{|\del \Omega}=0 \quad \hbox{ whenever} \quad \del\Omega \not=\emptyset
\end{aligned}
 \end{equation} 
 with the constraint \eqref{ions-beta}.
For any $p \ge1$, multiplying the elliptic equation by $e^{(p-1) \beta(t) \phi}$, and integrating by parts, we get 
$$ (p-1) \beta(t) \int_\Omega e^{(p-1)\beta(t) \phi} |\nabla \phi|^2 \; dx + \int_\Omega e^{p \beta(t) \phi} \; dx \le \|n_I(\cdot, t) \|_{L^{p}} \| e^{p\beta(t) \phi}\|_{L^1}^{\frac{p-1}{p}}$$
which implies 
\begin{equation}\label{p-H1} 
\|e^{\beta(t) \phi} \|_{L^p} \le  \| n_I(\cdot, t)\|_{L^p}, ,\qquad \forall p \in [1,\infty[ \end{equation}
uniformly in $t\ge 0$ and $p\ge1$.Eventually  by taking $p \to \infty$ in the above inequality, we have also
\begin{equation}\label{bd-ebp}\| e^{\beta(t) \phi (\cdot, t)}\|_{L^p} \le   \| n_I(\cdot, t) \|_{L^p} ,\qquad \forall p \in [1,\infty],\end{equation}
uniformly in $t\ge 0$ and in $\beta(t)$, as long as the right hand side is finite. This yields 
$$ -\Delta \phi = n_I - e^{\beta \phi} \in L^{\frac{d+2}{d}} (\Omega).$$
The standard elliptic problem then yields $\phi \in W^{2,\frac{d+2}{d}}$, whose norm is uniformly bounded in time. In particular, by Sobolev embedding, $\phi$ is uniformly bounded, for $d = 2$ or $3$.



We now write the solution to the elliptic problem as 
\begin{equation}\label{phi-K} \phi = \int_\Omega K(x,y) \Big[ n_I (y,t) - e^{\beta(t) \phi (y,t)} \Big] \; dy \end{equation}
in which $K(x,y)$ denotes the Green kernel of the Laplacian on $\Omega$ with the Neumann boundary condition or periodic boundary condition. 
It is classical that $$ |\partial_x^kK(x,y)| \le C_0 |x-y|^{2-d-k}, \qquad k\ge 0$$
for $d \ge 3$. For $d =2$, $K(x,y)$ is of order of $\log |x-y|$. 

\begin{lemma}\label{lem-bdE} With $E = -\nabla \phi$, there hold
$$\| E(\cdot, t)\|_{L^\infty}\le C_0 \|n_I(\cdot, t)\|_{L^1}^{\frac{1}{d}} \|n_I(\cdot, t)\|_{L^\infty}^{\frac{d-1}{d}}$$
uniformly in $t \ge 0$. 
\end{lemma}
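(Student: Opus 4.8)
The plan is to derive the $L^\infty$ bound on the electric field $E=-\nabla\phi$ directly from the representation formula \eqref{phi-K} by differentiating under the integral sign. Differentiating \eqref{phi-K} in $x$ gives
\begin{equation*}
E(x,t) = -\nabla_x \phi(x,t) = -\int_\Omega \nabla_x K(x,y)\,\big[ n_I(y,t) - e^{\beta(t)\phi(y,t)}\big]\; dy,
\end{equation*}
so that, using the kernel estimate $|\nabla_x K(x,y)|\le C_0|x-y|^{1-d}$, I would bound
\begin{equation*}
|E(x,t)| \le C_0 \int_\Omega |x-y|^{1-d}\,\big| n_I(y,t) - e^{\beta(t)\phi(y,t)}\big|\; dy.
\end{equation*}
Because of the bound \eqref{bd-ebp}, the source term $g := n_I - e^{\beta\phi}$ satisfies $\|g(\cdot,t)\|_{L^p}\le 2\|n_I(\cdot,t)\|_{L^p}$ for every $p\in[1,\infty]$, so it suffices to estimate $\int_\Omega |x-y|^{1-d}|g(y,t)|\,dy$ in terms of $\|n_I\|_{L^1}$ and $\|n_I\|_{L^\infty}$.

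The key step is the standard \emph{potential-estimate splitting}: fix a radius $R>0$ to be optimized, and split the integral into the near-field $\{|x-y|\le R\}$ and the far-field $\{|x-y|>R\}$. On the near-field I would use the $L^\infty$ bound on $g$ together with the fact that $|x-y|^{1-d}$ is locally integrable in $d$ dimensions,
\begin{equation*}
\int_{|x-y|\le R}|x-y|^{1-d}|g(y,t)|\,dy \le \|g\|_{L^\infty}\int_{|z|\le R}|z|^{1-d}\,dz = C_0\,\|g\|_{L^\infty}\,R,
\end{equation*}
since the radial integral $\int_0^R r^{1-d} r^{d-1}\,dr = R$ converges. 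On the far-field I would use the $L^1$ bound on $g$ and the pointwise bound $|x-y|^{1-d}\le R^{1-d}$,
\begin{equation*}
\int_{|x-y|> R}|x-y|^{1-d}|g(y,t)|\,dy \le R^{1-d}\,\|g\|_{L^1}.
\end{equation*}
Adding these gives $|E(x,t)|\le C_0\big(\|g\|_{L^\infty}R + \|g\|_{L^1}R^{1-d}\big)$.

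Finally I would optimize in $R$: balancing the two terms, one takes $R = \big(\|g\|_{L^1}/\|g\|_{L^\infty}\big)^{1/d}$, which yields
\begin{equation*}
|E(x,t)| \le C_0\,\|g\|_{L^1}^{1/d}\,\|g\|_{L^\infty}^{(d-1)/d}.
\end{equation*}
Inserting the comparison $\|g\|_{L^p}\le 2\|n_I\|_{L^p}$ coming from \eqref{bd-ebp} then gives the claimed bound $\|E(\cdot,t)\|_{L^\infty}\le C_0\|n_I(\cdot,t)\|_{L^1}^{1/d}\|n_I(\cdot,t)\|_{L^\infty}^{(d-1)/d}$ after absorbing all numerical constants into $C_0$, uniformly in $t$.

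I expect the only genuine obstacle to be bookkeeping rather than conceptual: one must confirm that the source term $g=n_I-e^{\beta\phi}$ is controlled in both $L^1$ and $L^\infty$ uniformly in $t$, which follows from \eqref{bd-ebp} (and the mass constraint $\|e^{\beta\phi}\|_{L^1}=m_0=\|n_I\|_{L^1}$), and that the case $d=2$ with its logarithmic kernel still gives $|\nabla_x K|\lesssim |x-y|^{-1}$ so the same splitting applies verbatim. The optimization step is the standard interpolation between $L^1$ and $L^\infty$ source norms against a Riesz-type kernel, so no new difficulty arises there.
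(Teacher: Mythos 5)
Your argument is correct and is exactly the proof the paper intends: the paper's own proof is the one-line remark that the lemma is ``straightforward, using \eqref{phi-K} and \eqref{bd-ebp}'', and your near-field/far-field splitting of the kernel $|\nabla_x K(x,y)|\le C_0|x-y|^{1-d}$ with the optimal radius $R=(\|g\|_{L^1}/\|g\|_{L^\infty})^{1/d}$ is the standard potential estimate being invoked there. The bookkeeping with $g=n_I-e^{\beta\phi}$ via $\|e^{\beta\phi}\|_{L^p}\le\|n_I\|_{L^p}$ and the $d=2$ logarithmic kernel is handled as the paper expects.
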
 
\begin{proof} The proof is straightforward, using \eqref{phi-K} and \eqref{bd-ebp}.\end{proof}

\subsection{A priori bounds on ions density}

Given the field $E(x,t)$, starting from $(x,v) \in \Omega \times \mathbb{R}^d$, the particle trajectories $(X(t), V(t))$ are defined by the ODEs
$$ \dot X = V, \qquad \dot V = E(X(t),t) $$
as long as $X(t) $ remains in the interior of $\Omega$. In the case $\Omega$ has a boundary, we let $t_0$ be the positive time when $X(t_0)$ hits the boundary, that is $X(t_0) \in \partial \Omega$. The trajectory is then continued by the ODE dynamics, with the new ``initial'' condition:
$$ X(t_0) = \lim_{t \to t_0^-} X(t), \qquad V(t_0): = \lim_{t \to t_0^-} \Big[ V(t) - 2(V(t)\cdot n(X(t)))n(X(t)) \Big] ,$$
which of course correspond to the specular boundary condition of particles, and so on, in case of multiple reflections. The backward trajectory $(X(t), V(t))$ is defined in the similar way, {for $0<t<t_0\,.$}

Then, the solution $f_+$ to the Vlasov equation is constructed through 
\begin{equation}\label{def-fion}f_+(x,v,t) = f_{0,+} (X(-t), V(-t)), \qquad \forall t\ge 0, \quad \forall (x,v)\in \Omega \times \mathbb{R}^d,\end{equation}
with $(X(0),V(0)) = (x,v)$. 
With $f_{0,+} (x,v) = 0$ for all $|v| \ge K_0$ for some positive $K_0$, we first compute the growth of the support in $v$. By definition, as long as $X(t) \in \Omega$, there holds 
$$ \frac{d}{dt}|V|^2 = 2 E \cdot V.$$
When $X(t)$ meets $\partial \Omega$, $|V(t)|$ is conserved under the specular reflection. Hence, for all $(x,v)\in \Omega \times \mathbb{R}^d$ with $|v|\le K_0$, we have 
\begin{equation}\label{bd-Vt} |V(t)| \le |v|  + \int_0^{|t|} \| E\|_{L^\infty} \; ds 
\end{equation}
for all $t \in \mathbb{R}$. Now, using the characteristic equation \eqref{def-fion}, we have  
$$ | n_I(x,t)| \le \int_{\mathbb{R}^d} | f_{0,+} (X(-t), V(-t)) | \; dv \le C_0 (K_0 + |V(-t)|)^{d}.$$
 
 Combining the last two estimates, we have obtained
\begin{equation}\label{bd-rho} \| n_I(\cdot, t) \|_{L^\infty} \le C_0 + C_0 \Big( \int_0^t \| E(\cdot, s)\|_{L^\infty}\; ds \Big)^d.
\end{equation}
Together with Lemma \ref{lem-bdE} and the fact that $n_I(\cdot , t) \in L^1$, the above yields
$$
\| n_I(\cdot, t) \|_{L^\infty} \le C_0 + C_0 \Big( \int_0^t  \|n_I(\cdot, s)\|_{L^\infty}^{\frac{d-1}{d}}\; ds \Big)^d.
$$
Hence, the Gronwall's inequality gives 
\begin{equation}\label{bd-rho1} 
 \| n_I(\cdot, t) \|_{L^\infty} \le C_T
 \end{equation}
 for all $t \in [0,T]$, for some positive $T$. In the two dimensional case, $T = \infty$.

\subsection{Averaging lemma}
In the sequel, we also need a priori compactness on the average of $f_+$ which follows from the classical $L^2$ averaging lemma (\cite{Lions}). Indeed, we write the Vlasov equation as 
$$ \partial_t f_+ + v \cdot \nabla_x f_+ = -\nabla_v (E f_+) .$$
Here, from the apriori estimates, $E \in L^\infty$ and $f \in L^1 \cap L^\infty$. Hence, 
$$\| f_+\|^2_{L^2 (0,T; L^2 (\Omega \times \RR^3))}  \le \| f_+\|_{L^\infty} \| f_+\|_{L^1 (0,T; L^1 (\Omega \times \RR^3))}  \le \| f_+\|_{L^\infty} \|n_I\|_{L^1((0,T) \times \Omega)}$$
and $$ \| E f_+ \|_{L^2 (0,T; L^2 (\Omega \times \RR^3))}  \le \| E\|_{L^\infty} \| f_+\|_{L^2 (0,T; L^2 (\Omega \times \RR^3))} .$$
By the classical averaging lemma and the fact that $f_+(x,v,t)$ is compactly supported, we have
$$ \int_{\RR^3} f_+(x,v,t) \varphi(v) \; dv \in H^{1/4} ((0,T)\times \Omega)$$
together with the uniform bound 
$$ \Big\| \int_{\RR^3} f_+(\cdot, \cdot,v) \varphi(v) \; dv  \Big\|_{H^{1/4} ((0,T)\times \Omega)} \le C_\varphi  \| E\|_{L^\infty} \| f_+\|_{L^2 (0,T; L^2 (\Omega \times \RR^3))} $$
for any test function $\varphi(v)$ in $C^\infty (\RR^3)$   
{ and in particular  for $\phi(v)=1 $ or $\phi(v)=\frac{|v|^2}2\,,$ used below.}


\subsection{Proof of local well-posedness}
The existence of local solutions to the ions problem \eqref{ions} now follows {with minor modifications}  the standard iteration procedure. Indeed, we construct $(\beta_n, \phi_n, f_n)$ as follows. 
Let $f_{0,+}\in (L^\infty \cap L^1)(\Omega \times \mathbb{R}^d) $ be any initial data  compactly supported in $v$ and satisfying :
$$\iint_{\Omega \times \mathbb{R}^d} \frac{|v|^2}{2} f_{0,+}(x,v)\; dvdx \le a  \mathcal{E}_0,\qquad \iint_{\Omega\times \mathbb{R}^d} f_{0,+}(x,v) \; dvdx = m_0$$
for some $a<1$ { (cf. \eqref{compatibility})}.
Set $f_0 (x,v,t) = f_{0,+}(x,v)$. We start the iteration with $n =0$. 
We denote in the sequel $\rho_n  (x,t) = \la   f_n (x,.,t) \ra .$

\begin{itemize}

\item  We will construct the unique solution $(\beta_n, \phi_n)$ to the elliptic problem 
\begin{equation}
\begin{aligned}
 -\Delta \phi_n + e^{\beta_n \phi_n} &= \rho_n , 
\qquad \int_\Omega e^{\beta_n\phi_n} \; dx  =m_0,
\\
 \frac{m_0 d}{2\beta_n} + \frac{1}{2} \int_\Omega |\nabla \phi_n|^2 \; dx & = \mathcal{E}_0 - \iint_{\Omega \times \mathbb{R}^d} \frac{|v|^2}{2} f_n(x,v,t)\; dvdx. \label{problem}
 \end{aligned}
\end{equation}
%
 
\item Then we will  construct $f_{n+1}$ by solving the linearized Vlasov equation 
\begin{equation}
 \partial_t f_{n+1} + v \cdot \nabla_x f_{n+1} - \nabla_x \phi_n \cdot \nabla_v f_{n+1} =0\label{liouvillen}
 \end{equation}
with the same initial data $f_{n+1}(x,v,0) = f_{0,+}(x,v)$. 

\end{itemize}


{However to solve the elliptic problem \eqref{problem} one needs to ensure that the quantity
\begin{equation}
 \overline{E_n(t)} =\mathcal{E}_0 - \iint_{\Omega \times \mathbb{R}^d} \frac{|v|^2}{2} f_n(x,v,t)\; dvdx. 
\end{equation} 
remains strictly positive. For a genuine solution this follows obviously from the energy conservation \eqref{ions-beta} and on the uniform bound \eqref{betabound1}, but for a iterative solution, this requires some extra argument. 
By iteration a sequence of decreasing positive times $0<T_n$ is introduced. They are characterized by the fact that  $ \overline{E_n(t)}$ is strictly positive for $0<t<T_n\,.$ Hence on such interval the solution of  \eqref{problem} is well defined. On any such interval, bounds for $(f_n,\phi_n,\beta_n)$ are derived uniformly in $n$. Hence, it is shown (cf. Lemma \ref{positif}) that
\begin{equation}
T_-=\inf T_n
\end{equation} 
is a strictly positive number which depends  only on the properties of the data at $t=0\,.$}

For the $n$-uniform bound, applying Lemma \ref{lem-bdE} and the bound \eqref{bd-rho} to the above iterative scheme, we obtain 
\begin{equation}
\begin{aligned}
 \| \rho_{n+1}(\cdot, t) \|_{L^\infty} 
 &\le  C_0 + C_0 \Big( \int_0^t \| E_n(\cdot, s)\|_{L^\infty}\; ds \Big)^d
 \\
 &\le C_0 + C_0 \Big( \int_0^t \| \rho_n(\cdot, s)\|_{L^\infty}^{\frac{d-1}{d}} \; ds \Big)^d
\end{aligned}
\end{equation}
for all $n\ge 0$. By iteration and the previous estimates, this proves that 
\begin{equation}\label{uni-bd} \| \rho_n(\cdot, t) \|_{L^\infty} \le C(t), \qquad \| E_n(\cdot, t) \|_{L^\infty}\le C(t),\qquad |\beta_n(t)|\le C(t),
\end{equation}
uniformly in $n$, for all positive time $t$ ($d=1,2$), and for $t \in [0,T]$ for some positive time $T$ ($d\ge 3$). Here, $C(t)$ denotes some continuous function in $t$.

{Eventually with $C_T=\sup_{0<t<T} C(t)$,  the above estimates can be used to prove the following.
\begin{lemma}\label{positif} 

1.  For any $f_{n+1}(x,v,t)$ one has, for $0<t<T$, the estimate:
\begin{equation}
\int_\Omega  \la \frac{|v|^2}2 f_{n+1}(t)\ra dx\le (2C^{\frac32}_T t+ ( \int_\Omega \la \frac{|v|^2}2 f_{n+1}(0)\ra dx)^{\frac12})^2\label{positive2}
\end{equation}
2.   As long as $t$ is small enough to satisfy the relation
\begin{equation}
 (2C^{\frac32}_T t+ (a{\mathcal {E}}_0)^{\frac12})^2< {\mathcal {E}}_0
\end{equation}
in which $a>1$ is given by \eqref{compatibility}, the expression:
$$
{\mathcal {E}}_0-\int_\Omega  \la \frac{|v|^2}2 f_{n+1}(t)\ra dx
$$
remains strictly positive.
\end{lemma}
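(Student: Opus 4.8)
The plan is to control the growth of the ion kinetic energy directly from the linearized Vlasov transport \eqref{liouvillen}, using the uniform field bound \eqref{uni-bd} already established on $[0,T]$. First I would compute the time derivative of the kinetic energy. Multiplying \eqref{liouvillen} by $\tfrac{|v|^2}{2}$ and integrating over $\Omega \times \mathbb{R}^d$, the transport term $v\cdot\nabla_x$ integrates to a boundary flux that vanishes under the specular reflection condition (since $|v|$ is preserved at reflection), and the field term yields
\begin{equation*}
\frac{d}{dt}\int_\Omega \la \tfrac{|v|^2}2 f_{n+1}\ra\, dx = \int_\Omega E_n \cdot \la v f_{n+1}\ra\, dx,
\end{equation*}
where $E_n = -\nabla\phi_n$. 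Denoting $\mathcal K(t) = \int_\Omega \la \tfrac{|v|^2}2 f_{n+1}(t)\ra\,dx$, I would bound the right-hand side by Cauchy--Schwarz in $v$: $|\la v f_{n+1}\ra| \le (\la |v|^2 f_{n+1}\ra)^{1/2}(\la f_{n+1}\ra)^{1/2}$, then integrate in $x$ using $\|E_n\|_{L^\infty}\le C_T$ and conservation of mass $\int_\Omega\la f_{n+1}\ra\,dx = m_0$. This gives $\tfrac{d}{dt}\mathcal K(t) \le C_T\, (2\mathcal K(t))^{1/2}\, m_0^{1/2} \le 2C_T^{3/2}\,\mathcal K(t)^{1/2}$ after absorbing the mass factor into the constant $C_T$ (consistent with how $C_T$ is defined to dominate the relevant data).

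The key observation is then that this differential inequality for the square-root of the energy is linear. Writing $g(t) = \mathcal K(t)^{1/2}$, the bound $\tfrac{d}{dt}\mathcal K \le 2C_T^{3/2}\mathcal K^{1/2}$ becomes $\tfrac{d}{dt}g \le C_T^{3/2}$, so $g(t) \le g(0) + C_T^{3/2} t$; squaring and adjusting the constant (the factor $2$ in \eqref{positive2} giving room for the mass term) yields exactly
\begin{equation*}
\mathcal K(t) \le \Big(2 C_T^{3/2} t + \mathcal K(0)^{1/2}\Big)^2,
\end{equation*}
which is \eqref{positive2}. Part 2 is then purely algebraic: by the compatibility hypothesis \eqref{compatibility}, $\mathcal K(0) = \int_{\Omega\times\mathbb R^d}\tfrac{|v|^2}2 f_{0,+}\,dxdv \le a\mathcal E_0$, so $\mathcal K(0)^{1/2}\le (a\mathcal E_0)^{1/2}$; substituting into \eqref{positive2} and invoking the smallness condition $(2C_T^{3/2}t + (a\mathcal E_0)^{1/2})^2 < \mathcal E_0$ forces $\mathcal K(t) < \mathcal E_0$, hence $\overline{E_n(t)} = \mathcal E_0 - \mathcal K(t) > 0$.

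The main subtlety I anticipate is not the inequality itself but ensuring it is genuinely a closed \emph{a priori} estimate within the iteration: the constant $C_T$ must depend only on the data at $t=0$ and not on $n$, which requires that the bounds \eqref{uni-bd} for $\|E_n\|_{L^\infty}$ hold on a common interval $[0,T]$ independent of $n$. This is precisely the circular-looking point the surrounding text flags — the field bound on $E_n$ rests on $\overline{E_n(t)}>0$ (so that \eqref{problem} is solvable and $\beta_n$ stays bounded via Corollary \ref{trivial}), while the positivity of $\overline{E_n(t)}$ is what we are proving. The resolution is a continuity/bootstrap argument: the estimate \eqref{positive2} shows that on the maximal subinterval where $\overline{E_n(t)}>0$ the energy cannot reach $\mathcal E_0$ before the explicit time determined by the smallness relation, so each $T_n$ is bounded below by that time, and $T_- = \inf_n T_n > 0$ depends only on $C_T$, $m_0$, $a$, and $\mathcal E_0$. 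I would make this precise by defining $T_n$ as the first time $\overline{E_n(t)}$ vanishes, running the above estimate on $[0,T_n)$, and checking that \eqref{positive2} prevents $\overline{E_n}$ from vanishing before the data-determined threshold, thereby closing the induction uniformly in $n$.
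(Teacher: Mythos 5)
Your proposal is correct and takes essentially the same route as the paper: multiply \eqref{liouvillen} by $\tfrac{|v|^2}{2}$, integrate (the specular boundary flux vanishes), apply Cauchy--Schwarz together with the uniform bounds \eqref{uni-bd} to obtain $\tfrac{d}{dt}\mathcal K \le C_T^{3/2}\mathcal K^{1/2}$, and integrate the resulting linear inequality for $\mathcal K^{1/2}$, with Part 2 following algebraically from \eqref{compatibility}. The only immaterial difference is in how Cauchy--Schwarz is deployed (the paper bounds $\int |\nabla_x\phi_n|^2 f_{n+1}\,dxdv$ via the density bound and $\|\nabla\phi_n\|_{L^2}$, whereas you use $\|E_n\|_{L^\infty}$ and mass conservation), and your closing remark on the bootstrap in $n$ matches the paper's surrounding discussion of $T_-=\inf T_n$.
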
 
\begin{proof}
From the equation \eqref{liouvillen}, one deduces the following usual relation:
\begin{equation}
\frac{d}{dt}\int_\Omega  \la \frac{|v|^2}2 f_{n+1}\ra dx=\int_\Omega \nabla_x \phi_n \cdot \la v f_{n+1}\ra dx\end{equation}
Therefore, together with  the Cauchy-Schwarz's inequality and \eqref {uni-bd}, one has the following estimate:
\begin{equation}
\begin{aligned}
&\frac{d}{dt}\int_\Omega  \la \frac{|v|^2}2 f_{n+1}\ra dx \le \int_{\Omega\times\mathbb{R}^d}|\nabla_x \phi_n(x)| | v f_{n+1}|dxdv\\
&\le   (\int_{\Omega\times\mathbb{R}^d}|\nabla_x \phi_n(x)|^2  f_{n+1} dxdv)^{\frac12}( \int_{\Omega} \la  \frac{| v |^2}2f_{n+1}\ra dx )^\frac12\\
&\le (C(t) \int_{\Omega} |\nabla_x \phi_n(x)|^2dx)^{\frac12}( \int_{\Omega} \la  \frac{| v |^2}2f_{n+1}\ra dx )^\frac12 
\\&\le  C_T( \int_{\Omega} \la  \frac{| v |^2}2f_{n+1}\ra dx )^\frac12\, \hbox{ for } t\in [0,T]\,.
\end{aligned}
\end{equation}
Hence, \eqref{positive2} follows by integration. The second statement is a direct consequence of the first. It is important to observe that the estimates involve only the quantity $C_T$, which has been globally evaluated. 
\end{proof}}
 Now we can consider the convergence  of the sequence $(f_n, \phi_n,\beta_n)$.
 Up to a subtraction of subsequences, $f_n \rightharpoonup f$ in $L^p(\Omega \times \mathbb{R}^d)$, $E_n \rightharpoonup E$ in $L^p(\Omega)$, and $\beta_n(t) \to \beta(t)$ for almost every where $t\in [0,T]$. By view of the elliptic problem for $\phi_n$, we in fact have $E_n = - \nabla \phi_n \in L^\infty (0,T; W^{1,p}(\Omega))$ for all $p\ge 1$. 

To gain regularity in time, we use the averaging lemma, yielding 
\begin{equation}\label{compactfn}\Big\| \int_{\RR^3} f_n(\cdot, \cdot,v) \varphi(v) \; dv  \Big\|_{H^{1/4} ((0,T)\times \Omega)} \le C_\varphi  \| E_n\|_{L^\infty} \| f_n\|_{L^2 (0,T; L^2 (\Omega \times \RR^3))} \le C_T C_\varphi\end{equation}
for any test function $\varphi(v)$ in $C^\infty (\RR^3)$.  Now we can pass to the limit of $n \to \infty$. We fix a test function of the form $\theta(x,t) \varphi(v)$. We get 
$$ \begin{aligned}\int_0^T &\iint_{\Omega \times \RR^3} \nabla_v \cdot ((\nabla_x \phi_n) f_{n+1}(x,v,t)) \theta(x,t) \varphi(v) \; dxdv dt 
\\& =
 -\int_0^T \int_\Omega \nabla_x \phi_n \theta(x,t) \cdot \Big( \int_{\RR^3} f_{n+1} (x,v,t) \nabla_v \varphi \; dv \Big) \; dxdt 
\\&\rightarrow 
 -\int_0^T \int_\Omega \nabla_x \phi \theta(x,t) \cdot \Big( \int_{\RR^3} f (x,v,t) \nabla_v \varphi \; dv \Big) \; dxdt 
 \end{aligned}$$ 
as $n \to \infty$. Similarly for the transport operator $\partial_t f_n + v \cdot \nabla_x f_n$, we obtain 
$$ \partial_t f + v \cdot \nabla_x f - \nabla_x \phi \cdot \nabla_v f = 0$$
in the weak sense. Now, we consider the elliptic problem 
$$\begin{aligned} - \Delta \phi_n + e^{\beta_n \phi_n} &= \rho_n(t) \\
 \frac{m_0 d}{2\beta_n} + \frac{1}{2} \int_\Omega |\nabla \phi_n|^2 \; dx  &= \mathcal{E}_n(t): = \mathcal{E}_0 -  \iint_{\Omega \times \mathbb{R}^d} \frac{|v|^2}{2} f_n(x,v,t)\; dvdx .\end{aligned}$$
Since $f_n$ is compactly supported in $v$, the compactness property \eqref{compactfn} in time for $f_n$ yields the compactness for $\rho_n$ and $\mathcal{E}_n$. The above elliptic problem has data $\rho_n(t)$ and $\mathcal{E}_n(t)$ converges pointwise in time to $\rho(t)$ and $\mathcal{E}(t)$, for almost every time $t \in [0,T].$ Now, for each fixed time $t$, $\beta_n$ and $\phi_n$ are bounded in $\RR$ and $W^{2,p}(\Omega)$, and so, up to a subtraction of subsequences, they converge strongly to $\beta(t)$ and $\phi(x,t)$ in $\RR$ and $H^1(\Omega)$, respectively.  In addition, for each time $t$, $(\beta(t), \phi(x,t))$ solves 
$$\begin{aligned} - \Delta \phi + e^{\beta \phi} &= \rho(t) \\
 \frac{m_0 d}{2\beta} + \frac{1}{2} \int_\Omega |\nabla \phi|^2 \; dx  &= \mathcal{E}(t) = \mathcal{E}_0 -  \iint_{\Omega \times \mathbb{R}^d} \frac{|v|^2}{2} f(x,v,t)\; dvdx .\end{aligned}$$
Now by uniqueness of the above elliptic problem, $(\beta, \phi)$ is thus a solution to the reduced ions problem. This yields a local solution. 
\begin{remark}{\em  The use of the averaging lemma in the present proof seems to be an ``overkill", since usually time regularity in a ``weak space" is deduced from the equations and the Aubin-Lions theorem can be used. However in the present case the time regularity is obtained for $\rho_n(t)$ and $\la \frac{|v|^2}2 f_n(x,v,t)\ra$, which is sufficient for the almost everywhere point wise convergence of $(\beta_n (t),\phi_n(x,t))$. Since the mapping $(\rho_n(t),\la \frac{|v|^2}2 f_n(x,v,t)\ra)\rightarrow (\beta_n (t),\phi_n(x,t)) $ is non linear and not explicit, the use of the above averaging lemma 
to obtain the almost everywhere convergence seems to be the simpler approach.}
\end{remark}

 \subsection{Proof of global well-posedness} 
In the two dimensional case, the linear Gronwall inequality yields at once the uniform bound \eqref{uni-bd} for all time $t$. Hence, the previous analysis provides a global solution to the reduced ions problem.

It remains to consider the three-dimensional case. By a view of \eqref{bd-Vt} and \eqref{bd-rho}, it suffices to prove 
\begin{equation}\label{bd-ESch} \int_0^t \| E(X(s),s)\|_{L^\infty} \; ds \le C_0 |V(t)|^\alpha + C_0\end{equation}
for some positive constant $\alpha<1$. 
The boundedness of $V(t)$ and hence $\rho(t)$ then follows. We follow the proof of Schaeffer for the classical 3D Vlasov-Poisson system. Indeed, let us write the Poisson equation as  
$$-\Delta \phi = n_I - e^{\beta \phi}$$
 and hence, 
$$ \begin{aligned}
E(x,t) &=  - \int_\Omega \nabla_x K(x,y) \Big[ n_I (y,t) - e^{\beta(t) \phi (y,t)} \Big] \; dy
\\
&=: E_1(x,t) + E_2(x,t).
\end{aligned}$$ 
Since $e^{\beta(t) \phi (y,t)}$ is bounded, $E_2(t)$ is uniformly bounded. The bound \eqref{bd-ESch} for $E_1(x,t)$ follows identically from the proof of Schaeffer for the classical Vlasov-Poisson system, using the boundedness of $f_+$ and of the total kinetic energy of $f_+$; see, for instance, \cite{Sch91,Glassey}. This completes the proof of Theorem \ref{theo-existence}. 
 

\subsection{Proof of uniqueness}

 
In this section, we prove the uniqueness of solutions of the ion problem. Indeed, let $(\beta_1, \phi_1, f_1)$  and $(\beta_2, \phi_2, f_2)$  be the two solutions to \eqref{ions} and \eqref{ions-beta}, with the same compactly supported initial data $f_0$. We assume that 
\begin{equation}\label{assmp-bdE} \int_0^t \Big( \| E_{1}(s,\cdot)\|_{L^\infty} +  \| E_{2}(s,\cdot)\|_{L^\infty}\Big) \; ds  < \infty\end{equation}
and 
 \begin{equation}\label{id-energy} \frac{m_0 d}{2\beta_j(t)} + \frac{1}{2} \int_\Omega |\nabla \phi_j|^2 \; dx + \iint_{\Omega \times \mathbb{R}^d} \frac{|v|^2}{2} f_j(x,v,t)\; dvdx = \mathcal{E}_0\end{equation}
 for $j = 1,2$, and for the same energy constant $E_0$. We also assume that 
 $$ 
 \sup_{x,t} \| \nabla_v f_1 \|_{L^2(\mathbb{R}^d)} 
 <\infty.$$ 
In the end of this section, we shall verify the above assumptions when $\Omega = \mathbb{T}^d$. 

 
 
 We show that 
 $$ \beta_1 = \beta_2, \qquad \phi_1 = \phi_1, \qquad f_1 = f_2.$$
 From the identity \eqref{id-beta}, $\beta_j(t)$ remains bounded. As a consequence of \eqref{bd-Vt} and \eqref{assmp-bdE}, the velocity support of $f_j(x,v,t)$ is bounded, for $j=1,2$.  
For convenience, let us denote 
$$ \beta = \beta_1 - \beta_2, \qquad \phi = \phi_1 - \phi_2, \qquad f = f_1 - f_2,$$
and set 
$$ y(t) = \iint_{\Omega\times \mathbb{R}^d} |f(x,v,t)|^2 \; dxdv .$$

The uniqueness follows directly from the following proposition. 
\begin{proposition} There holds 
$$ \frac{d}{dt}y(t) \le C_0 \Big( y(t) + y(t)^2 \Big).$$
\end{proposition}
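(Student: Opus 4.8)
The plan is to derive a Gronwall-type differential inequality for $y(t) = \iint |f|^2\,dxdv$ by differentiating in time and exploiting the structure of the difference equations. First I would write down the equation satisfied by $f = f_1 - f_2$. Subtracting the two Vlasov equations gives
\begin{equation*}
\partial_t f + v\cdot\nabla_x f - \nabla_x\phi_1\cdot\nabla_v f = \nabla_x\phi\cdot\nabla_v f_2,
\end{equation*}
where $\phi = \phi_1 - \phi_2$. Multiplying by $f$ and integrating over $\Omega\times\mathbb{R}^d$, the transport terms $v\cdot\nabla_x f$ and $\nabla_x\phi_1\cdot\nabla_v f$ contribute only boundary terms that vanish under the specular reflection condition (since they are in divergence form in $(x,v)$ and the flow is measure-preserving). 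Hence
\begin{equation*}
\frac12\frac{d}{dt} y(t) = \iint_{\Omega\times\mathbb{R}^d} \big(\nabla_x\phi\cdot\nabla_v f_2\big)\, f \; dxdv.
\end{equation*}

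Next I would estimate the right-hand side. Using the Cauchy--Schwarz inequality in $v$ pointwise in $x$ together with the hypothesis $\sup_{x,t}\|\nabla_v f_1\|_{L^2(\mathbb{R}^d)} < \infty$ (which, via the energy identity and the boundedness of the velocity support, transfers to $f_2$ as well), one controls $\int |\nabla_v f_2|\,|f|\,dv \le \|\nabla_v f_2\|_{L^2_v}\, \|f\|_{L^2_v}$, so the integral is bounded by $C_0\,\|\nabla_x\phi\|_{L^\infty(\Omega)}\, y(t)^{1/2}$ after a further Cauchy--Schwarz in $x$. Thus the crux becomes bounding $\|\nabla_x\phi\|_{L^2}$ or $\|\nabla_x\phi\|_{L^\infty}$ by $y(t)^{1/2}$ up to the $\beta$-difference.

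The main obstacle is controlling the potential difference $\phi = \phi_1 - \phi_2$ and the temperature difference $\beta = \beta_1 - \beta_2$ in terms of $y(t)$. Subtracting the two elliptic equations yields
\begin{equation*}
-\Delta\phi + \big(e^{\beta_1\phi_1} - e^{\beta_2\phi_2}\big) = \rho_1 - \rho_2,
\end{equation*}
where $\rho_j = \la f_j\ra$. The source $\rho_1 - \rho_2 = \la f\ra$ is controlled by $y(t)^{1/2}$ using the bounded velocity support, and the nonlinear term must be split as $e^{\beta_1\phi_1} - e^{\beta_2\phi_2} = (e^{\beta_1\phi_1} - e^{\beta_1\phi_2}) + (e^{\beta_1\phi_2} - e^{\beta_2\phi_2})$; the first piece has the right monotone sign when tested against $\phi$ (giving coercivity), while the second is a linear-in-$\beta$ perturbation to be absorbed. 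I would therefore need a companion estimate showing $|\beta| \le C_0\, y(t)^{1/2}$, obtained by subtracting the two energy constraints \eqref{id-energy} and using that $\frac{m_0 d}{2\beta_j}$ is Lipschitz on the a priori interval $[\frac{m_0d}{2\mathcal E_0}, C]$ where $\beta_j$ lives by Corollary \ref{trivial}, combined with the $H^1$ control of $\phi$. Feeding these back produces $\|\nabla_x\phi\|_{L^\infty} \le C_0(y(t)^{1/2} + y(t))$ by elliptic regularity, and substituting into the expression for $\frac{d}{dt}y$ yields the claimed bound $\frac{d}{dt}y(t)\le C_0(y(t) + y(t)^2)$. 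The delicate point throughout is the coupled, nonlinear, nonlocal dependence of $(\beta,\phi)$ on $f$ through the constraint system, which prevents a direct energy estimate and forces the simultaneous bookkeeping of the $\beta$- and $\phi$-differences.
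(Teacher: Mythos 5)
Your overall architecture matches the paper's: an $L^2$ energy estimate for the difference $f=f_1-f_2$, reduced to controlling the potential difference; an elliptic estimate for $\phi=\phi_1-\phi_2$ with the same splitting of $e^{\beta_1\phi_1}-e^{\beta_2\phi_2}$ into a monotone piece and a $\beta$-perturbation; and a companion estimate for $\beta=\beta_1-\beta_2$ from the subtracted energy constraints. But there is a genuine gap at the one point where the argument is delicate: your two companion estimates are circular, and you do not break the circle. The elliptic equation gives (as in \eqref{Lp-phi}) $\|\phi\|_{H^1}\le C_0(|\beta|+\|\rho\|_{L^2})$, with $|\beta|$ entering through the perturbation $e^{\beta_1\phi_2}-e^{\beta_2\phi_2}=\mathcal{O}(|\beta|)$; and the subtracted energy constraint gives only $|\beta|\le C_0(\|\nabla\phi\|_{L^2}+y^{1/2})$, because $\tfrac12\int(|\nabla\phi_1|^2-|\nabla\phi_2|^2)=\tfrac12\int\nabla(\phi_1+\phi_2)\cdot\nabla\phi$ is of size $\|\nabla\phi\|_{L^2}$ times an $O(1)$ (not small) quantity. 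Chaining the two yields $|\beta|\le C_0C_0'|\beta|+\dots$ with constants that have no reason to be less than one, so nothing closes. Announcing ``simultaneous bookkeeping'' of the $\beta$- and $\phi$-differences names the difficulty without resolving it, and the asserted intermediate bound $|\beta|\le C_0\,y(t)^{1/2}$ is exactly what is not proved.

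The paper's resolution is an exact cancellation, not an absorption. Testing the difference of the elliptic equations against $-2e^{-\beta_1\phi_1}\Delta\phi$ produces the cross term $-\beta\int_\Omega(\phi_1+\phi_2)\Delta\phi$ on the left of \eqref{phi12-bd}; multiplying the subtracted energy constraint by $-2\beta$ produces the same cross term with the opposite sign in \eqref{beta12-bd}. Adding the two identities kills the linear-in-$\beta$ coupling entirely and leaves $|\beta|^2+\|\nabla\phi\|_{L^2}^2+\|\Delta\phi\|_{L^2}^2\le C_0(|\beta|^4+\|f\|_{L^2}^2+\|f\|_{L^2}^4)$, where only quadratic-in-the-differences remainders survive. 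Even then one is not done: the $|\beta|^4$ term is absorbed into $|\beta|^2$ only after proving that $\beta_j(t)$ is continuous in time and using $\beta(0)=0$, so that $|\beta|$ is small on a short interval. Both ingredients --- the sign-matched cancellation of the cross term and the continuity-in-time argument for $\beta$ --- are absent from your sketch, and without them (or a substitute) your final step ``feeding these back produces $\|\nabla_x\phi\|_{L^\infty}\le C_0(y^{1/2}+y)$'' does not follow.
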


\begin{proof} First, the difference $f = f_1 - f_2$ solves the following Vlasov equation
$$ \partial_t f + v \cdot \nabla_x f - \nabla_x(\phi_1 + \phi)\cdot \nabla_v f = \nabla_x \phi \cdot \nabla_v f_1.$$
By assumption that $\sup_{x,t} \| \nabla_v f_1 \|_{L^2(\mathbb{R}^d)} <\infty$, the standard energy estimate yields 
\begin{equation}\label{y-f} \frac12 \frac{d}{dt} \| f\|_{L^2}^2 \le C_0 \Big( \| f\|_{L^2}^2 +\|\nabla \phi\|_{L^2}^2\Big) ,\end{equation}
for some universal constant $C_0$ that depends on $ \sup_{x,t} \| \nabla_v f_1 \|_{L^2(\mathbb{R}^d)}$. 

Next, we use the Poisson equation for $\phi$, which now reads
\begin{equation}
\label{diff-p}
-\Delta \phi + e^{\beta_1 \phi_1} - e^{\beta_2 \phi_2} = \rho = \int_{\mathbb{R}^d} f(x,v,t) \; dv.
\end{equation}
We write 
$$e^{\beta_1 \phi_1} - e^{\beta_2 \phi_2}  = e^{\beta_1 \phi_1} - e^{\beta_1 \phi_2}  + e^{\beta_1 \phi_2} - e^{\beta_2 \phi_2}  $$
and use the fact that $|x-y|^{p-2}(e^x - e^y)(x-y) \ge \theta_0 |x-y|^p$, for all $x,y$ in a compact set and all $p>1$. Noting that $\beta_j, \phi_j$ are uniformly bounded and multiplying the elliptic equation by $|\phi|^{p-2} \phi$, we easily obtain 
\begin{equation}\label{Lp-phi} \| \phi\|_{L^p} \le C_0 \Big( |\beta| + \| \rho \|_{L^p}\Big), \qquad \forall ~p>1.\end{equation}


To obtain a better estimate, we write 
$$ 
\begin{aligned}
e^{\beta_1 \phi_1} - e^{\beta_2 \phi_2} 
&= e^{\beta_1 \phi_1} \Big( 1 - e^{\beta_2\phi_2 - \beta_1 \phi_1}\Big) 
\\
&= e^{\beta_1 \phi_1} \Big(\beta_1\phi_1 - \beta_2 \phi_2 + R_{\beta, \phi}\Big)
\end{aligned}$$
in which $R_{\beta, \phi} = \mathcal{O} (|\beta_1 - \beta_2|^2 + |\phi_1 - \phi_2|^2)$. We further write 
$$ 
\begin{aligned}
e^{\beta_1 \phi_1} - e^{\beta_2 \phi_2} 
&= \frac12 e^{\beta_1 \phi_1} \Big((\beta_1+ \beta_2)(\phi_1  - \phi_2) + (\beta_1- \beta_2) (\phi_1 + \phi_2) + 2 R_{\beta, \phi}\Big)
\end{aligned}$$
We next multiply the elliptic equation \eqref{diff-p} by $-2e^{-\beta_1 \phi_1} \Delta \phi$, upon using the above identity and recalling that $\phi = \phi_1 - \phi_2$ and $\beta = \beta_1 - \beta_2$, we obtain  
 $$ \int_\Omega \Big[ 2 e^{-\beta_1 \phi_1} |\Delta \phi|^2 + (\beta_1 + \beta_2) |\nabla \phi|^2  - \beta  (\phi_1 + \phi_2) \Delta \phi  -  R_{\beta, \phi} \Delta \phi\Big] = - 2\int_\Omega \rho e^{-\beta_1 \phi_1} \Delta \phi .$$
Together with the Young's inequality, this yields 
\begin{equation}\label{phi12-bd} 
\begin{aligned}\int_\Omega 
&\Big[ e^{-\beta_1 \phi_1} |\Delta \phi|^2 + (\beta_1 + \beta_2) |\nabla \phi|^2 - \beta  (\phi_1 + \phi_2) \Delta \phi \Big] 
\\&\le C_0 
\Big( |\beta|^4  + \int_\Omega ( |\phi|^4  + |\rho|^2)\Big)
\end{aligned}\end{equation}
in which the bound on remainder $R_{\beta, \phi} = \mathcal{O} (|\beta|^2 + |\phi|^2)$ was used.

We now use the fact that the energy for the two solutions are the same; see \eqref{id-energy}. Subtracting one to another, we get  the conservation of the energy
 $$ \frac{m_0 d (\beta_2 - \beta_1)}{2\beta_1 \beta_2} + \frac{1}{2} \int_\Omega ( |\nabla \phi_1|^2 - |\nabla\phi_2|^2) \; dx + \iint_{\Omega \times \mathbb{R}^d} \frac{|v|^2}{2} (f_1 - f_2)\; dvdx =0.$$  
Recalling $\phi = \phi_1 - \phi_2$ and $\beta = \beta_1 - \beta_2$, we multiply the above by $-2\beta$
and note that the middle term can be written as 
$$\frac12 \int_\Omega ( |\nabla \phi_1|^2 - |\nabla\phi_2|^2)  = - \frac12\int_\Omega (\phi_1 + \phi_2) \Delta (\phi_1 - \phi_2) = -\frac12 \int_\Omega (\phi_1 + \phi_2) \Delta \phi.$$
We get 
\begin{equation}\label{beta12-bd} \frac{2m_0 d \beta^2 }{2\beta_1 \beta_2}  +   \beta \int_\Omega  (\phi_1 + \phi_2) \Delta \phi  = 2\beta  \iint_{\Omega \times \mathbb{R}^d} \frac{|v|^2}{2} f \; dvdx.\end{equation} 
Here in \eqref{beta12-bd} we note that the kinetic energy is bounded by $\| f\|_{L^2}$, since $f$ is compactly supported in $v$. Adding \eqref{phi12-bd} and \eqref{beta12-bd} together and recalling that $\beta_j$ are bounded below away from zero, we obtain at once 
\begin{equation}\label{bd-pbeta} 
\begin{aligned}
|\beta|^2 + \|\nabla \phi\|^2_{L^2}+ \|\Delta \phi \|^2_{L^2} \le C_0 \Big( |\beta|^4  +  \|\phi\|_{L^4}^4  +\|f\|_{L^2}^2 \Big) .
\end{aligned}\end{equation}
Now using the $L^p$ bound \eqref{Lp-phi}, with $p=4$, and recalling that $f$ is compactly supported, we obtain from the previous estimate 
\begin{equation}\label{bd-pbeta1} 
\begin{aligned}
|\beta|^2 + \|\nabla \phi\|^2_{L^2}+ \|\Delta \phi \|^2_{L^2} \le C_0 \Big( |\beta|^4   +\|f\|^2_{L^2} + \|f\|_{L^2}^4 \Big) .
\end{aligned}\end{equation}

It remains to take care of $|\beta|^4$ on the right-hand side. To this end, we shall prove that $\beta_j(t)$ is continuous in time. It suffices to show the continuity of $\beta_1$. Indeed, we note that $f_1$ is continuous in time, since $f_1$ is a $C^1$ function with respect to $x,v$, and
$$\partial_t f_1 = - v \cdot \nabla_x f_1 + \nabla_x \phi_1 \cdot \nabla_v f_1.$$
Now we fix $f_1$, and study the elliptic problem 
$$ -\Delta \phi_1 + e^{\beta_1 \phi_1} = \rho_1(t) , \qquad E(\beta_1)= E_0(t): = E_0 - \iint_{\Omega \times \mathbb{R}^d} \frac{|v|^2}{2} f_1(x,v,t) \; dxdv$$
in which $ E(\beta_1): = \frac{m_0 d}{\beta_1} + \frac12 \int_\Omega |\nabla \phi_1 |^2 $. Here, $\rho_1(t)$ and $E_0(t)$ are two continuous functions. Fix a $t$ and let $t_n$ be a sequence so that $t_n \to t$. Then, there are unique solutions $(\beta_1(t_n), \phi_1(t_n))$ and $(\beta_1(t), \phi_1(t))$ to the elliptic problems, corresponding to $(\rho(t_n), E_0(t_n))$ and $(\rho(t), E_0(t))$, respectively. In addition, we have $\beta_1(t_n)$ and $\phi_1(t_n)$ are uniformly bounded in $\mathbb{R}$ and $H^2$, respectively. Hence, there is a subsequence $t_{n_k}$ so that $(\beta_1(t_{n_k}), \phi_1(t_{n_k}))$ converges, and by uniqueness, the whole series converges to the same limit $(\beta_1(t), \phi_1(t))$. In particular, this yields the continuity of $\beta_1(t)$. 

Finally, by the continuity, the term $|\beta|^4$ on the right-hand side of \eqref{bd-pbeta1} can be absorbed into the left-hand side, for small $t$, since $\beta(0) = 0$, yielding 
$$\begin{aligned}
|\beta|^2 + \|\nabla \phi\|^2_{L^2}+ \|\Delta \phi \|^2_{L^2} \le C_0 \Big( \|f\|^2_{L^2} + \|f\|_{L^2}^4 \Big) .
\end{aligned}
$$
Putting this into \eqref{y-f} finishes the proof of the proposition, and hence the proof of the uniqueness of the solutions to the ion problem \eqref{ions}-\eqref{ions-beta}. 
\end{proof}

We end the section by proving the following propagation of regularity in the torus $\Omega = \mathbb{T}^d$. For uniqueness, it suffices to prove the propagation of regularity, assumed in Theorem \ref{theo-uniqueness}, in a short time interval. 

\begin{proposition} Let $\Omega = \mathbb{T}^d$ and $(\beta, \phi, f_+)$  be a solution to \eqref{ions} and \eqref{ions-beta} with compactly $v$-supported and bounded  initial data $f_{+,0}$. If we assume that 
$$\| \nabla_x f _{+,0}\|_{L^\infty_{x,v}} +  \| \nabla_v f _{+,0}\|_{L^\infty_{x,v}} < \infty,$$ then for small positive time $T$, there holds 
$$ \sup_{t\in [0,T]} \Big( \| \nabla_x f _+(t)\|_{L^\infty_{x,v}} +  \| \nabla_v f _+(t)\|_{L^\infty_{x,v}} \Big) < \infty.$$
\end{proposition}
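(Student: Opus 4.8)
The plan is to reduce the whole statement to a time-integrable $L^\infty$ bound on the Hessian $\nabla_x^2\phi$ read along the characteristic flow. Differentiating the Vlasov equation in \eqref{ions} with respect to $x$ and $v$ and using that $\partial_{v_i}v=e_i$ while $\partial_{x_i}v=0$, one obtains, along the characteristics $\dot X=V,\ \dot V=-\nabla_x\phi(X,t)$ and with $\tfrac{D}{Dt}=\partial_t+v\cdot\nabla_x-\nabla_x\phi\cdot\nabla_v$, the closed linear system
$$ \frac{D}{Dt}\nabla_v f_+ = -\nabla_x f_+, \qquad \frac{D}{Dt}\nabla_x f_+ = (\nabla_x^2\phi)\,\nabla_v f_+ . $$
Setting $M(t):=\|\nabla_x f_+(t)\|_{L^\infty_{x,v}}+\|\nabla_v f_+(t)\|_{L^\infty_{x,v}}$ and taking suprema gives the differential inequality $M'(t)\le\big(1+\|\nabla_x^2\phi(t)\|_{L^\infty}\big)M(t)$. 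Hence everything hinges on bounding $\|\nabla_x^2\phi\|_{L^\infty}$, and the point is to feed the control of $\nabla_x f_+$ back through the elliptic equation in a self-consistent way.

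To estimate the Hessian I would first recall that on $[0,T]$ the velocity support of $f_+$ stays inside a fixed ball $B_{R_T}$: this follows from \eqref{bd-Vt} together with the a priori bounds $\|E(\cdot,s)\|_{L^\infty}\le C_T$ (Lemma \ref{lem-bdE}) and \eqref{bd-rho1}. Consequently $\nabla_x n_I=\int_{\mathbb{R}^d}\nabla_x f_+\,dv$ is $v$-supported in $B_{R_T}$ and obeys $\|\nabla_x n_I(\cdot,t)\|_{L^\infty}\le C_T\,\|\nabla_x f_+(t)\|_{L^\infty}\le C_T\,M(t)$, while $\|n_I(\cdot,t)\|_{L^\infty}\le C_T$ is already known. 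Interpolating the Hölder seminorm between an $L^\infty$ bound and a Lipschitz bound, one gets for any fixed $\alpha\in(0,1)$ that $\|n_I(\cdot,t)\|_{C^{0,\alpha}}\le C_T\,(1+M(t))^\alpha$.

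Next I would write the Poisson equation as $-\Delta\phi=n_I-e^{\beta(t)\phi}$. Since $\phi\in W^{2,p}(\Omega)$ uniformly in $t$ (so $\nabla\phi\in L^\infty$ by Sobolev embedding for $d\le 3$) and $\beta(t)$ is bounded by Corollary \ref{trivial}, the nonlinear term $e^{\beta\phi}$ is Lipschitz in $x$ with a uniform constant and hence lies in $C^{0,\alpha}$ uniformly in $t$. Schauder estimates on the torus (where there are no boundary terms) then give
$$ \|\nabla_x^2\phi(\cdot,t)\|_{L^\infty}\le\|\phi(\cdot,t)\|_{C^{2,\alpha}}\le C_T\big(1+\|n_I(\cdot,t)\|_{C^{0,\alpha}}\big)\le C_T\,(1+M(t))^\alpha . $$
Substituting into the inequality for $M$ yields $M'(t)\le C_T\,(1+M(t))^{1+\alpha}$, a mildly superlinear ODE whose solution remains finite on a short interval $[0,T]$ depending only on $M(0)=\|\nabla_x f_{+,0}\|_{L^\infty}+\|\nabla_v f_{+,0}\|_{L^\infty}$ and the constant $C_T$. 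This is exactly the asserted conclusion.

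The main obstacle is the borderline elliptic step: the second derivative of the Green kernel is a Calderón–Zygmund kernel of order $|x-y|^{-d}$, so $\|\nabla_x^2\phi\|_{L^\infty}$ cannot be bounded by $\|n_I\|_{L^\infty}$ alone and one is forced to pay a derivative on $n_I$. What rescues the argument is that this derivative is only needed with a \emph{subcritical} Hölder exponent $\alpha<1$ (equivalently one could invoke a log-Lipschitz bound $\|\nabla_x^2\phi\|_{L^\infty}\le C_T\big(1+\log(e+M(t))\big)$ and close by an Osgood argument), so the feedback loop between $M(t)$ and $\nabla_x n_I$ is only weakly superlinear and integrates up over short time; one also needs the Schauder constant to be uniform in $t$, which is guaranteed by the uniform $\beta(t)$ and $\phi$ bounds. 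The remaining technical point is to legitimize differentiating along characteristics: I would carry out the above as an a priori estimate on the smooth iterates $(\beta_n,\phi_n,f_{n+1})$ of Section 3 (or on mollified data), for which the force is regular and the characteristic computation is rigorous, obtain bounds on $M_n$ uniform in $n$ on a common short interval, and transfer them to the limit $f_+$ by weak-$*$ lower semicontinuity of the $L^\infty$ norm of $\nabla_{x,v}f_n$; this is free of any circularity with the uniqueness theorem, whose hypothesis \eqref{assmp-U} it is meant to verify.
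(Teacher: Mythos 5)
Your proof is correct and follows the same skeleton as the paper's: differentiate the Vlasov equation to obtain the coupled transport system for $\nabla_x f_+$ and $\nabla_v f_+$, reduce everything to a time-integrable bound on $\|\nabla_x^2\phi\|_{L^\infty}$, and close with a superlinear Gronwall inequality on a short time interval. The one genuinely different step is the elliptic estimate. The paper differentiates the Poisson equation, $-\Delta D_x\phi = D_x n_I - D_x e^{\beta\phi}$, and applies its potential-theoretic Lemma \ref{lem-bdE} to this differentiated equation, thereby paying a full derivative on the density:
$\|D_x^2\phi\|_{L^\infty}\le C_0\|D_x f_+\|_{L^\infty}+C_0\|e^{\beta\phi}\|_{L^\infty}\|D_x\phi\|_{L^\infty}\le C_0(1+M(t))$,
which leads to the quadratic integral inequality
$\|\nabla_x f_+(t)\|_{L^\infty}\le C_0\int_0^t(1+\|\nabla_x f_+(s)\|_{L^\infty})\,\|\nabla_v f_+(s)\|_{L^\infty}\,ds$,
closed by the standard nonlinear Gronwall lemma. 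You instead invoke Schauder estimates on the torus together with the interpolation $\|n_I\|_{C^{0,\alpha}}\le C_T(1+M(t))^{\alpha}$, arriving at the milder ODE $M'\le C_T(1+M)^{1+\alpha}$ (or the log-Lipschitz/Osgood variant). Both routes pay a derivative on $n_I$ via the compact velocity support and both close only locally in time, which is all the proposition asserts; your version is marginally sharper in the exponent, and your closing remarks --- running the a priori estimate on the smooth iterates before passing to the limit, and the observation that an $L^\infty$ bound on $n_I$ alone cannot control the Hessian of $\phi$ --- make explicit two points the paper leaves implicit.
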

\begin{proof} The proof is straightforward. Indeed, $\nabla_x f_+$ and $\nabla_v f_+$ satisfy 
$$\begin{aligned}
 \Big( \partial_t + v \cdot \nabla_x - E \cdot \nabla_v \Big) \nabla_x f_+ &= \sum_k \nabla_x E_k \partial_{v_k} f_+  
\\
 \Big( \partial_t + v \cdot \nabla_x - E \cdot \nabla_v \Big) \nabla_v f_+ &=  -\nabla_x f_+.  
 \end{aligned}$$
This yields 
$$ \| \nabla_v f_+ (t)\|_{L^\infty} \le \int_0^t \| \nabla_x f_+ (s)\|_{L^\infty}  \; ds $$
and 
$$\| \nabla_x f_+(t)\|_{L^\infty} \le \int_0^t \| D_x^2 \phi\|_{L^\infty} \| \nabla_v f_+ (s)\|_{L^\infty}  \; ds .$$
Here, $\phi $ solves the elliptic problem $-\Delta\phi = n_I - e^{\beta \phi}$ and hence 
$$ -\Delta D_x \phi = D_x n_I - D_x e^{\beta \phi} .$$
 Hence, applying Lemma \ref{lem-bdE}, for $D_x\phi$, together with the fact that $\Omega$ is bounded, yields at once 
  $$
  \begin{aligned}
   \| D_x^2 \phi\|_{L^\infty} &\le C_0 \|D_x n_I\|_{L^\infty} + C_0 \| D_x e^{\beta \phi}\|_{L^\infty}
   \\
   &\le C_0 \|D_xf_+\|_{L^\infty} + C_0 \| e^{\beta \phi}\|_{L^\infty}\| D_x \phi \|_{L^\infty}
   \end{aligned}$$
in which we noted that $f_+$ is compactly supported in $v$. Recall that $\|e^{\beta \phi}\|_{L^\infty }\le \|n_I\|_{L^\infty} \le C_0$ and $\| D_x \phi \|_{L^\infty} \le C_0 \|n_I\|_{L^\infty} \le C_1$, since $f_+\in L^\infty$. Hence, 
$$\| \nabla_x f_+(t)\|_{L^\infty} \le C_0 \int_0^t (1 +\|  \nabla_x f_+(s)\|_{L^\infty})\| \nabla_v f_+ (s)\|_{L^\infty}  \; ds .$$ 
The proposition follows at once from the standard nonlinear Gronwall's lemma. 
\end{proof}
\section{Conclusion}

We end the paper with some remarks:
\begin{itemize}

\item For the interaction for the evolution of a plasma involving ions and electrons an approximation of the density of electrons is often used and it is referred as the Maxwell-Boltzmann relation. The aim of the present contribution was to fully justify  this approach assuming a kinetic description for the electrons where the characteristic interaction time is faster than rate of relaxation to equilibrium.
This seems the most natural way to obtain a proof. On the other hand, as indicated by the point ii) of Theorem \ref{theo-formal}, considering a macroscopic equation for the ions  seems compatible with the present approach. And eventually one should observe that in some case the counterpart of the Maxwell-Boltzmann relation can be derived for some well adapted macroscopic description; cf. \cite{JP,Grenier} for an example and references.

\item One may wonder at getting a electrons temperature which is constant with respect to the space variable. But recall we deal here with a modelling at the scale of the Debye length (for instance some tens or hundreds of Debye lengths) and at this scale it is natural that the electrons temperature is constant even if it is not the case at a much larger scale.\item The main difficulty towards a complete  proof that is valid in full generality seems to come  from the fact that the conservation of energy for large time for the solution of the Boltzmann equation, even formally true and expected in general at the level of mathematical rigor,  remains an open problem. 
This difficulty  persists in the presence of a electromagnetic interaction. This is the reason why some uniform regularity hypothesis is assumed in the theorem, Theorem \ref{theo-formal}. 


\item In the present contribution the coupling between the ions and electrons is described through the effect of the electric field, magnetic effect and collisions between ions and electrons are ignored, such issues may be the object of future works.

\end{itemize}
  ~\\
{\em Acknowledgement.} The authors thank Claudia Negulescu for introducing us the problem. We also thank CMLS (Polytechnique), ICES (Austin), Penn State University, and Wolfgang Pauli Institute (Vienna) for support and hospitality of a visit during which part of this work was carried out.

\bibliographystyle{plain}

\end{document}